\newaliascnt{lem}{satz}  
\newtheorem{lem}[lem]{Lemma}  
\newaliascnt{thm}{satz}  
\newtheorem{thm}[thm]{Theorem}  
\newaliascnt{clm}{satz}  
\newaliascnt{cor}{satz}  
\newtheorem{cor}[cor]{Corollary}  
\newaliascnt{conj}{satz}  
\newaliascnt{prop}{satz}  
\newtheorem{prop}[prop]{Proposition}  
\begin{document}

\newcommand{\h}{non-constant harmonic function of finite energy }
\newcommand{\hdot}{non-constant harmonic function of finite energy}
\newcommand{\frho}{a $p$-$q$-flow $f$ of finite energy with intensity $I>0$ and a potential $\rho$ of finite energy in $G/f$ with $\rho(p)\neq \rho(q)$ }
\newcommand{\frhodot}{a $p$-$q$-flow $f$ of finite energy with intensity $I>0$  and a potential $\rho$ of finite energy in $G/f$ with $\rho(p)\neq \rho(q)$}

\title{A characterization of the locally finite networks admitting non-constant harmonic functions of finite energy}
\author{Johannes Carmesin}
\newpage
\maketitle

\abstract{
We characterize the locally finite networks admitting non-constant harmonic functions of finite energy.
Our characterization unifies the necessary existence criteria of Thomassen \cite{thomassenCurrents2,thomassenCurrents} and of Lyons and Peres \cite{LyonsBook}
with the sufficient criterion of Soardi \cite{SoardiBook}.

We also extend a necessary existence criterion for non-elusive non-constant harmonic functions of finite energy 
due to Georgakopoulos~\cite{agelos_unique_flow}.
}

\section{Introduction}

One of the standard problems
in the study of infinite electrical networks is to specify under what conditions
a network is in  ${\cal O}_{HD}$, that is, every harmonic function of finite energy is constant \cite{{LyonsBook}, {SoardiBook}, {Uni_So_Wo},{thomassenCurrents}}. 
The purpose of this paper is to characterize the networks in  ${\cal O}_{HD}$.

\medskip

There are two general sufficient criteria for a network to be in ${\cal O}_{HD}$.
Let us illustrate these by a simple example, the infinite ladder shown in \autoref{ladder}.

    \begin{figure}[htp]
\begin{center}
    	  \includegraphics[width=5cm, height= 2cm]{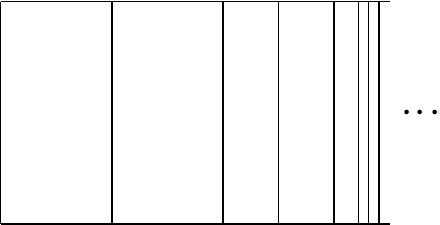}
   	  \caption{For which resistance function is the infinite ladder in ${\cal O}_{HD}$?}
   	  \label{ladder}
\end{center}
   \end{figure}

The first criterion, due to Thomassen \cite{thomassenCurrents2} and to Lyons and Peres \cite{LyonsBook}, 
implies that this network is in  ${\cal O}_{HD}$ if the resistances of the rungs are small enough,
the sum of their conductances is infinite.
The second, folklore, criterion \cite{LyonsBook} is that a network is in ${\cal O}_{HD}$ 
if it is recurrent. For the ladder, Nash-Williams's recurrence criterion \cite{LyonsBook} implies that
this is the case if on each side of the ladder the sum of the resistances is infinite.

\medskip

Our characterization of the networks in ${\cal O}_{HD}$ implies both these sufficient criteria. Conversely it shows that, in a sense,  
they are the only two reasons that can force a network to be in ${\cal O}_{HD}$.
Let $G/A/B$ be the graph obtained from $G$ by contracting each of the disjoint sets $A$ and $B$ to a vertex.
Our characterization is:

\begin{thm}
\label{char}
 A connected locally finite network $(G,r)$ is not in ${\cal O}_{HD}$ if and only if 
there are  transient vertex-disjoint subnetworks $A$ and $B$ such that the contraction $G/A/B$ admits a potential $\rho$ of finite energy with $\rho(A)\neq\rho(B)$.
\end{thm}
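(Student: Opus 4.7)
My plan is to prove both implications using a Royden-type orthogonal decomposition of finite-energy functions together with the standard correspondence between non-constant harmonic functions of finite energy and pairs consisting of a flow and a potential.

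For the sufficient direction, given transient vertex-disjoint subnetworks $A,B$ and a finite-energy potential $\rho$ on $G/A/B$ with $\rho(A)\neq\rho(B)$, I would lift $\rho$ to a function $\tilde\rho$ on $V(G)$ by setting $\tilde\rho\equiv\rho(A)$ on $A$ and $\tilde\rho\equiv\rho(B)$ on $B$. Since all edge differences of $\tilde\rho$ inside $A$ and inside $B$ vanish, its Dirichlet energy on $G$ equals that of $\rho$ on $G/A/B$ and is therefore finite. Writing the Royden decomposition $\tilde\rho=h+g_{0}$ with $h$ harmonic of finite energy and $g_{0}$ in the energy-norm closure of finitely supported functions, it suffices to show $h$ is non-constant. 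The transience of $A$, witnessed by a unit flow of finite energy from $A$ to infinity inside $A$, should provide a strictly positive lower bound on the energy of any function that jumps by $|\rho(A)-c|$ across $\partial A$, and symmetrically for $B$; since $\rho(A)\neq\rho(B)$, no single constant $c$ can make both jumps negligible, ruling out $\tilde\rho=c+g_{0}$.

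For the necessary direction, given a non-constant harmonic function $h$ of finite energy, I would extract the subnetworks from level sets. Pick $s<t$ strictly in the range of $h$ and set $A=\{v:h(v)\le s\}$ and $B=\{v:h(v)\ge t\}$. The truncation $\rho=\max(\min(h,t),s)$ has Dirichlet energy at most that of $h$, is constant on $A$ and on $B$, and hence descends to a finite-energy potential on $G/A/B$ with $\rho(A)=s\neq t=\rho(B)$. The substantive step is transience. The induced current $f(uv)=(h(u)-h(v))/r(uv)$ is a flow of finite energy on $G$; combining its restriction to edges inside $A$ with a correction along $\partial A$ should yield a finite-energy flow from a fixed vertex of $A$ out of every finite subset of $A$, certifying transience via the flow criterion used by Thomassen and Lyons--Peres. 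The set $B$ is handled symmetrically.

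The main obstacle, in my view, is genuinely certifying transience of $A$ and $B$ in the necessary direction. Harmonicity of $h$ forces the net flow out of any finite $F\subset A$ across $\partial F$ measured in $G$ to vanish, so the naive restriction of $f$ to the internal edges of $A$ is not by itself a transience-witnessing flow; one has to repair it along the boundary, perhaps by choosing $s,t$ outside a small set of bad values, by working with $\{h<s\}$ in place of $\{h\le s\}$, or by passing to a limit along nested sublevel sets while controlling energy. The sufficient direction, by contrast, should be fairly mechanical once the jump-energy lower bound induced by the transience of $A$ and $B$ is set up.
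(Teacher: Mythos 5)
Your two directions fare very differently, so let me address them separately.

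Your sufficient direction is correct in outline but takes a genuinely different route from the paper. The paper avoids the Royden decomposition entirely: it turns the flow witnesses of transience of $A$ and $B$ into an $a$-$b$-flow $f$ supported in $E(A)\cup E(B)$, uses the potential $\rho$ to blow up the resistances on a \emph{finite} edge set $D$ disjoint from $E(A)\cup E(B)$ until the free current has energy exceeding ${\cal E}(f)$ (\autoref{cri_pot}), concludes that free and wired currents differ in $(G,r_D)$, and transfers back to $(G,r)$ via \autoref{h_add2}. Your argument instead invokes the decomposition $\mathbf{D}=\mathbf{HD}\oplus\mathbf{D}_0$ and the fact that a finite-energy unit flow to infinity inside $A$ pairs against finitely supported functions to give $\mathcal{E}_A(u|_A)\geq u(a)^2/\mathcal{E}(\theta)$, hence no function constant and nonzero on a transient subnetwork lies in $\mathbf{D}_0$. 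Since $\rho(A)\neq\rho(B)$, no constant $c$ kills both obstructions, so the harmonic part of $\tilde\rho$ is non-constant. This is a clean and shorter argument, at the price of importing the Royden decomposition and the capacity characterization of transience as black boxes, where the paper only uses the elementary Hilbert-space projection lemma (\autoref{hilb}) and limits of finite-network currents.

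Your necessary direction has a genuine gap, and it is exactly the one you flag: you never certify transience of $A=\{h\le s\}$ and $B=\{h\ge t\}$, and none of your proposed repairs (perturbing $s,t$, open versus closed sublevel sets, nested limits) addresses the actual difficulty. The problem is structural, not a matter of bad values of $s$: restricting $h_E$ to the edges of a sublevel set cuts off \emph{incoming} flow at some boundary vertices and \emph{outgoing} flow at others, so the boundary defects have both signs and there is no distinguished vertex where a surplus of intensity accumulates; there is nothing for a minimization argument to trim down to an exact $v$-flow. The paper's missing idea is to choose $A$ and $B$ \emph{directionally} rather than as level sets: fix one directed edge $\vec d=(d,a,b)$ with $h_E(\vec d)=I>0$ and let $A$ be the set of vertices admitting a directed path to $a$ all of whose edges carry positive $h_E$ (and dually $B$ from $b$). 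With this choice every positive in-edge at a vertex of $A\setminus\{a\}$ stays inside $A$, so restricting $h_E$ to $A$ produces only \emph{nonnegative} accumulations away from $a$ and a surplus of at least $I$ at $a$ (the deleted edge $\vec d$); \autoref{hilb} applied to the convex set $\{g: 0\le g\le h_E \text{ on positive edges},\ \text{accumulation}\ge I \text{ at } a,\ \ge 0 \text{ elsewhere}\}$ then yields an exact $a$-flow of intensity $I$, and (K2) forces $A\cap B=\emptyset$. Your truncation $\rho=\max(\min(h,t),s)$ is fine and is essentially what the paper does, but it must be anchored at $s=h(b)$, $t=h(a)$ so that it is constant on the directionally defined $A$ and $B$. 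As written, your forward implication does not close.
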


Since networks containing transient networks are transient, it is clear that \autoref{char} implies the second sufficient criterion mentioned earlier.
It is also not hard to deduce the sufficient criterion of Thomassen, Lyons and Peres formally from \autoref{char}; see Section~\ref{sec:cor}.

In our ladder example, it is easy to show that up to slight modification the
only two
transient vertex-disjoint subnetworks $A$ and $B$ of the infinite ladder 
are the two infinite sides of the ladder.
It is easy to show that a side of the ladder is transient if and only if the sum over its resistances is finite.
As here $G/A/B$ has only the two contraction vertices $A$ and $B$, the unique (up to adding a constant) potential in $G/A/B$ with $\rho(A)-\rho(B)=U$ has the energy $U^2$ times the sum over the conductances of the rungs.
Hence \autoref{char} yields that the infinite ladder is in ${\cal O}_{HD}$ if and only if the sum over the conductances of the rungs is infinite or 
the sum over the resistances of any side of the ladder is infinite.
Note that the last requirement is slightly stronger than the second sufficient criterion.

\medskip

\autoref{char} also implies some new and easily applicable existence criteria for non-constant harmonic functions.
The following corollary strengthens the well-known fact \cite{agelos_unique_flow} that a network $(G,r)$ with
$\sum_{e\in E(G)} 1/r(e)<\infty$ is in ${\cal O}_{HD}$:

\begin{cor}\label{G-S}
Let $(G,r)$ be a connected locally finite network, and let $S$ be a set of edges 
such that $G-S$ is connected and $\sum_{e\in S} 1/r(e)$ is finite.
The network $(G-S,r)$ is in ${\cal O}_{HD}$ if and only if $(G,r)$ is.
(Here $G-S$ denotes the graph obtained from $G$ by deleting $S$ and then all isolated vertices.)
\end{cor}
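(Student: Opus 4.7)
The plan is to apply \autoref{char} to both $(G,r)$ and $(G-S,r)$ and transfer the witnesses for being outside ${\cal O}_{HD}$ between the two networks. The two implications are asymmetric: extending witnesses from $G-S$ to $G$ is routine, while restricting witnesses from $G$ to $G-S$ requires more care.

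For the direction $(G-S,r)\notin {\cal O}_{HD}\Rightarrow(G,r)\notin {\cal O}_{HD}$, I start from transient disjoint subnetworks $A,B$ of $G-S$ and a finite-energy potential $\rho$ on $(G-S)/A/B$ with $\rho(A)\neq \rho(B)$. After replacing $\rho$ by its truncation to the interval with endpoints $\rho(A)$ and $\rho(B)$ (which does not increase energy and preserves $\rho(A)$ and $\rho(B)$), I may assume $\rho$ is bounded. The subnetworks $A$ and $B$ remain transient when considered inside $G$, since adding ambient edges cannot destroy transience. I extend $\rho$ to the vertices of $V(G)\setminus V(G-S)$ by giving them an arbitrary value in the bounded range. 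The energy of this extension on $G/A/B$ equals the energy on $(G-S)/A/B$ plus the contributions of $S$-edges that are not contracted; each such contribution is at most $(\rho(A)-\rho(B))^2/r(e)$, so the extra energy is at most $(\rho(A)-\rho(B))^2\sum_{e\in S}1/r(e)<\infty$.

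For the reverse direction, I start from witnesses $A,B,\rho$ in $G$. Let $A'$ be a transient connected component of the network obtained from $A$ by removing the edges in $S\cap E(A)$ together with all resulting isolated vertices; define $B'$ analogously. Define $\rho'$ on $(G-S)/A'/B'$ by setting $\rho'(v)=\rho(A)$ for $v\in V(A)\setminus V(A')$, $\rho'(v)=\rho(B)$ for $v\in V(B)\setminus V(B')$, and $\rho'(v)=\rho(v)$ elsewhere. The energy of $\rho'$ is at most the energy of $\rho$: we drop the contributions from $S$-edges, and any edge of $G-S$ internal to $V(A)$ (resp.\ $V(B)$) contributes zero since both endpoints receive the value $\rho(A)$ (resp.\ $\rho(B)$). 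Since $\rho'(A')=\rho(A)\neq \rho(B)=\rho'(B')$, these yield the required witnesses, provided $A'$ and $B'$ are transient.

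The main obstacle is thus to show that removing the edge set $T:=S\cap E(A)$ from $A$ leaves a transient connected component, despite $T$ being infinite in general. I would argue via the variational characterization of transience: since $A$ is transient, the capacity $C^{A}(F,\infty)$ from a finite set $F\subseteq V(A)$ to infinity is unbounded as $F$ exhausts $V(A)$, so I can choose $F$ with $C^{A}(F,\infty)>\sum_{e\in T}1/r(e)$. For any $\phi$ with $\phi\equiv 1$ on $F$, $\phi\to 0$ at infinity, and $0\leq \phi\leq 1$, we have $E_{A-T}(\phi)\geq E_{A}(\phi)-\sum_{e\in T}1/r(e)$; taking infima gives $C^{A-T}(F,\infty)>0$, so some connected component of $A-T$ intersecting $F$ is transient and serves as $A'$. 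The same argument produces $B'$.
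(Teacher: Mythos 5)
Your proof is correct, and the easier direction (transferring witnesses from $G-S$ to $G$ by truncating $\rho$ to the range between $\rho(A)$ and $\rho(B)$ and paying at most $(\rho(A)-\rho(B))^2\sum_{e\in S}1/r(e)$ extra energy) coincides with the paper's argument. For the harder direction you take a genuinely different route. The paper stays on the flow side: it uses \autoref{hilb} to prove \autoref{small_flow} (an $a$-flow survives deletion of edges carrying total flow at most $I/4$), applies the Cauchy--Schwartz inequality together with $\sum_{e\in S}1/r(e)<\infty$ to conclude $\sum_{s\in S}|f_a(\vec{s})|<\infty$, splits $S$ into a finite part $S_2$ (handled by \autoref{h_del}) and a part $S_1$ carrying little flow, and only then re-applies \autoref{char}. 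You instead work entirely on the potential side, proving the cleaner intermediate statement that deleting any edge set $T$ of finite total conductance from a transient network leaves a transient component: choose a finite $F$ with $C^{A}(F,\infty)>\sum_{e\in T}1/r(e)$ and observe that $[0,1]$-valued test functions lose at most $\sum_{e\in T}1/r(e)$ energy when the $T$-edges are removed. This bypasses \autoref{small_flow}, the Cauchy--Schwartz splitting and \autoref{h_del} altogether, at the price of importing the Dirichlet/capacity characterization of transience, which the paper never sets up; the unboundedness of $C^{A}(F_n,\infty)$ along an exhaustion is the one step you assert without proof, but it does follow in a line from the paper's flow definition of transience (restrict a finite-energy unit $a$-flow to the edges not spanned by $F_n$ to see that the effective resistance from $F_n$ to infinity tends to $0$). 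Your construction of the new witness potential on $(G-S)/A'/B'$, assigning $\rho(A)$ (resp.\ $\rho(B)$) to the discarded parts of $A$ (resp.\ $B$), is also sound, since every surviving edge inside $V(A)$ or $V(B)$ then contributes zero energy.
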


We show that the condition `` $\sum_{e\in S} 1/r(e)$ is finite'' is best possible in a very strong sense; see Section \ref{sec:cor} for details.

Our next corollary offers an example application of \autoref{char}
where $A$, $B$ and $\rho$ can be constructed explicitly from the
properties of the graph. Its special case of unit resistances was already treated in~\cite{SoardiBook}.

\begin{cor}\label{r_fin}
 Let $(G,r)$ be a connected locally finite network.
If $G$ has a cut $F$ such that  $\sum_{e\in F} 1/r(e)$ is finite, and there are
two components of $G-F$ each containing a transient network,
then $(G,r)$ is not in ${\cal O}_{HD}$.
\end{cor}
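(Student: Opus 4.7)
The plan is to apply \autoref{char} with $A$ and $B$ taken to be the two transient subnetworks supplied by the hypothesis, one contained in each of two distinct components $C_A$ and $C_B$ of $G-F$. These are automatically vertex-disjoint, so the only thing left to do is exhibit a potential $\rho$ of finite energy on $G/A/B$ with $\rho(A)\neq\rho(B)$.

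The natural candidate is the indicator potential of $C_B$: I would set $\rho(B)=1$ together with $\rho(v)=1$ for every $v\in V(C_B)\setminus V(B)$, and $\rho=0$ on all remaining vertices of $V(G/A/B)$, in particular $\rho(A)=0$, $\rho=0$ on $V(C_A)\setminus V(A)$, and $\rho=0$ on vertices of any further components of $G-F$. Then $\rho(A)=0\neq 1=\rho(B)$, so only the finiteness of the energy remains to be verified.

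For the energy bound, observe that by construction $\rho$ is constant on each component of $G-F$, once we identify $V(A)$ with the contraction vertex $A$ and $V(B)$ with the contraction vertex $B$. Hence for an edge $e=vw$ of $G$, the value $\rho(\bar v)-\rho(\bar w)$ can be nonzero only when $v$ and $w$ lie in different components of $G-F$, which forces $e\in F$; and in that case $|\rho(\bar v)-\rho(\bar w)|\leq 1$. Therefore the energy of $\rho$ is bounded above by $\sum_{e\in F}1/r(e)$, which is finite by hypothesis, and \autoref{char} delivers the conclusion. The whole argument is really a direct application of \autoref{char}, so there is no serious obstacle; the one place that calls for mild care is the observation that edges of $F$ whose endpoints happen to sit in the same component of $G-F$ (and similarly edges of $G$ lying inside $V(A)$ or $V(B)$, which become loops in $G/A/B$) contribute zero to the energy, which is exactly what makes the reduction to a sum over $F$ legitimate.
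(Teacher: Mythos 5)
Your argument is correct and is essentially the paper's own proof: both take $A$ and $B$ to be the given transient subnetworks and use the indicator potential of one side of the cut (the paper puts the value $1$ on the component containing $A$ rather than $B$, which is immaterial), bounding the energy by $\sum_{e\in F}1/r(e)$ and invoking \autoref{char}. No issues.
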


A harmonic function is \emph{non-elusive} if it satisfies the mean-value property not only
 at vertices but, more generally, at every finite cut; see Section \ref{sec:geo} for a precise definition.
We generalize the above mentioned criterion of Thomassen, Lyons and Peres so as to extend a necessary criterion for the existence of 
non-elusive non-constant harmonic functions of finite energy due to Georgakopoulos \cite{ agelos_unique_flow},
 which needs a completely new proof.

This paper is organized as follows:
We begin in Section \ref{sec:def} by giving the basic definitions.
After proving the main result in Section \ref{sec:chara}, we draw further
conclusions from it in Section \ref{sec:cor}.
In Sections \ref{sec:geo} and \ref{sec:barri_proof}, we extend a theorem of Georgakopoulos as indicated above.

\section{Definitions and basic facts} \label{sec:def}

We will be using the terminology of Diestel \cite{DiestelBook05} for graph
theoretical terms.
All graphs will be locally finite if we do not explicitly say something
different.

A \emph{network} is a pair $(G,r)$, where $G$ is an (undirected) (multi-) graph
and 
$r:E(G)\rightarrow \mathbb{R}_{> 0}$ a function 
assigning a \emph{resistance} to every edge. 
Let $c(e):=1/r(e)$ be the conductance of $e$.
\emph{A  network is locally finite}
if the graph is.
A function $h:V(G)\to \mathbb{R}$ is called a \emph{potential}.

A \emph{harmonic function} is a potential satisfying the
\emph{mean-value property} at every vertex $v$, that is,
 $h(v)$ is the mean-value over the $h$-values of its neighbors weighted with the corresponding conductance:

\[
   h(v)= \left(  \sum_{e=\{v,w\}} c(e) \right)^{-1}    \sum_{e=\{v,w\}}   h(w) c(e) 
\]

A network is in ${\cal O}_{HD}$ if every harmonic function of finite energy is constant.

\subsection{Kirchhoff's cycle law (K2)}

A directed edge is an ordered triple $(e,x,y)$, where $e\in E(G), x,y\in e,
x\neq y$.
 For $\vec{e}=(e,x,y)$, define
 $init(\vec{e}):=x, ter(\vec{e}):=y$ and $\overleftarrow{e}:=(e,y,x)$.
Let $\vec{E}(G)$ be the set of all directed edges of $G$.

A potential $\rho$ induces a function on the directed edges via 
$f(\vec{e}):= [\rho(init(\vec{e}))-\rho(ter(\vec{e}))]/r(e)$.
This function $f$ is \emph{antisymmetric}, that is,
$f(\vec{e})=-f(\overleftarrow{e})$ holds for every directed edge $\vec{e}$.
Moreover, $f$ satisfies Kirchhoff's cycle-law, which we state after a few
definitions.
Every cycle $C$ of $G$ corresponds to a \emph{directed cycle} $\vec{C}$, defined
as follows:
Let $v_0e_0v_1...v_ne_nv_0$ be one of the orientations of $C$.
We define: $\vec{C}:=\{ (e_i,v_{i-1},v_{i})|0\leq i\leq n \}$, where $i-1$ is evaluated in $\mathbb{Z}/n\mathbb{Z}$.
Note that $\vec{C}$ does depend on the chosen orientation.
Similarly one defines for a walk $K$ a \emph{directed walk} $\vec{K}$. 

An antisymmetric function $\varphi$ on the directed edges \emph{satisfies
Kirchhoff's cycle law (K2)} if for every directed cycle
$\vec{C}$ in $G$, there holds:

\begin{equation}\label{k2}
  \sum_{\vec{e}\in \vec{C}} r(e) \varphi(\vec{e})=0 
\tag{\ensuremath{K2}}
\end{equation}

Notice that (K2) also holds for directed closed walks if it holds for all cycles.
The product $r(e) \varphi(\vec{e})$ is called the  \emph{voltage of $\vec{e}$}.
Kirchhoff's cycle law says that the sum of voltages along every cycle is zero.

\subsection{Kirchhoff's node law (K1)}

An antisymmetric function $\varphi:\vec{E}\to \mathbb{R}$ \emph{satisfies Kirchhoff's node
law (K1) at $v$} if:

\begin{equation}\label{k1}
 \sum_{\vec{e}\in \vec{E}| v=ter(\vec{e})} \varphi(\vec{e})=0 
\tag{\ensuremath{K1}}
\end{equation}

Call the sum on the left the \emph{accumulation of $\varphi$ at $v$}.
Note that a potential satisfies the mean-value property at $v$ if and only if
the induced function on $\vec{E}$ satisfies (K1) at $v$.
A \emph{$v$-flow of intensity $I$} is an antisymmetric function having  accumulation $I$ at $v$ and satisfying $(K1)$ at every other vertex.
Similarly, a \emph{$p$-$q$-flow of intensity $I$} has accumulation $-I$ at $p$ and $I$ at $q$ and satisfies $(K1)$ at every other vertex.

The following lemma implies the well-known fact that every finite connected 
network is in  ${\cal O}_{HD}$.

\begin{lem} \label{kreis}
Let $(G',r')$ be a finite network and let $f$ be a flow of intensity zero with
$f(\vec{e})>0$ for some directed edge $\vec{e}$.
Then there exists a directed cycle $\vec{C}$ with $\vec{e}\in \vec{C}$ and
$f(\vec{c})>0$ for every $\vec{c}\in\vec{C}$.
\end{lem}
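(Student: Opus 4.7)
The plan is to find a directed path $P$ from $y$ to $x$ whose every edge carries strictly positive $f$-value, and then to close $P$ via $\vec{e}=(e,x,y)$ into the desired cycle. The engine behind everything is a cut argument based on (K1).

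More precisely, let $U$ be the set of vertices reachable from $y$ by some directed walk whose every edge $\vec{g}$ satisfies $f(\vec{g})>0$. My first claim is that $x\in U$. Suppose not. By the definition of $U$, no directed edge $\vec{g}$ with $init(\vec{g})\in U$ and $ter(\vec{g})\notin U$ satisfies $f(\vec{g})>0$; by antisymmetry, every directed edge $\vec{g}$ with $init(\vec{g})\notin U$ and $ter(\vec{g})\in U$ satisfies $f(\vec{g})\geq 0$. On the other hand, summing the accumulation of $f$ over all $v\in U$ and observing that the contributions of edges internal to $U$ cancel in pairs (again by antisymmetry), we obtain
$$\sum_{\vec{g}\,:\,init(\vec{g})\notin U,\;ter(\vec{g})\in U} f(\vec{g}) \;=\; 0,$$
since (K1) holds at every vertex (the flow has intensity zero). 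Every summand on the left is non-negative and the summand for $\vec{e}$ itself is strictly positive (we are using $y\in U$ and $x\notin U$): a contradiction.

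Hence $x\in U$, so there is a walk from $y$ to $x$ all of whose edges carry positive $f$-value. I pick such a walk $P$ of minimum length; it is automatically a path, since any repeated vertex could be shortcut. The underlying edge $e$ does not occur on $P$: its orientation $\overleftarrow{e}$ from $y$ to $x$ satisfies $f(\overleftarrow{e})=-f(\vec{e})<0$, while the other orientation $\vec{e}$ points from $x$ to $y$ and so cannot appear on any $y$-$x$ path before reaching its endpoint. Concatenating $\vec{e}$ with $P$ therefore produces a directed cycle $\vec{C}$ containing $\vec{e}$ on which $f$ is strictly positive throughout, as required.

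The main obstacle is the cut argument. The delicate point is to recognise that the failure of reachability forces every boundary edge of $U$ to contribute non-negatively to the net inflow into $U$, which together with the strictly positive contribution of $\vec{e}$ contradicts the net balance of $0$ guaranteed by (K1). Once positive reachability from $y$ to $x$ is established, the conversion of the resulting walk into a simple directed cycle through $\vec{e}$ is routine.
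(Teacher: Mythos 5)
Your proof is correct and follows essentially the same route as the paper: both define the set of vertices positively reachable from $ter(\vec{e})$, apply (K1) across the resulting cut to force $init(\vec{e})$ into that set, and close the resulting positive path with $\vec{e}$ into the desired cycle. The only difference is cosmetic (you spell out the walk-to-path reduction and the sign bookkeeping at the cut in more detail).
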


\begin{proof}

Color a vertex $v$ gray if there is a directed path $\vec{P}$ from $ter(\vec{e})$ to $v$ such that $f(\vec{p})>0$ for all $\vec{p}\in \vec{P}$.
For every directed edge $\vec{g}$ pointing from a gray vertex to one that is
not gray, we have $f(\vec{g})\leq 0$.
As $f$ satisfies (K1) at the finite cut between the gray vertices and the rest, $f(\vec{g})= 0$.
Thus $\vec{e}$ is not in this cut and, since $ter(\vec{e})$ is gray, $init(\vec{e})$ is gray, too.
Thus there is a directed path $\vec{P}$ from
$ter(\vec{e})$ to $init(\vec{e})$ and this path combined with $\vec{e}$ forms the desired cycle.
\end{proof}

\subsection{Energy}

The \emph{Energy of $\varphi$} is defined as ${\cal E}(\varphi):=\sum_{e\in E}r(e)
\varphi^2(e)$.
As common in the literature
\cite{{LyonsBook},{SoardiBook}}, 
we  will only study functions of finite energy.
The requirement of finite energy turns the antisymmetric functions into a
Hilbert-space via $\langle f, g\rangle:=\sum_{e\in E}
r(e)f(\vec{e})g(\vec{e})$. 
In this Hilbert-space the norm is the square-root of the energy.
This is structurally interesting and allows us to profit from the following tool:

\begin{lem}[\cite{Ana_book}, Theorem 4.10] \label{hilb}
If $C$ is a non-empty, closed, convex subset
of a Hilbert space, then there is a unique point $y\in C$ of minimum norm among
all elements of C.
\end{lem}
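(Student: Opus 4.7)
The plan is to combine the parallelogram identity with completeness of the Hilbert space. Write $d := \inf_{x \in C} \|x\|$, which is finite since $C$ is non-empty. Choose a minimizing sequence $(x_n) \subseteq C$ with $\|x_n\| \to d$. The key observation is that by convexity, $\tfrac{1}{2}(x_n + x_m) \in C$, so $\|x_n + x_m\| \geq 2d$ for all $n, m$. Applying the parallelogram identity
\[
\|x_n - x_m\|^2 + \|x_n + x_m\|^2 = 2\|x_n\|^2 + 2\|x_m\|^2,
\]
I obtain
\[
\|x_n - x_m\|^2 \leq 2\|x_n\|^2 + 2\|x_m\|^2 - 4d^2,
\]
and the right-hand side tends to $0$ as $n, m \to \infty$. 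Hence $(x_n)$ is Cauchy.

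Next, I would use completeness of the Hilbert space to obtain a limit $y$ of $(x_n)$. Since $C$ is closed, $y \in C$, and by continuity of the norm $\|y\| = \lim_n \|x_n\| = d$. This yields existence.

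For uniqueness, suppose $y, y' \in C$ both satisfy $\|y\| = \|y'\| = d$. By convexity, $\tfrac{1}{2}(y + y') \in C$, hence $\|y + y'\| \geq 2d$. The parallelogram identity then gives
\[
\|y - y'\|^2 = 2\|y\|^2 + 2\|y'\|^2 - \|y + y'\|^2 \leq 4d^2 - 4d^2 = 0,
\]
so $y = y'$.

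The only real obstacle is knowing to deploy the parallelogram identity; alternative approaches via weak compactness of closed bounded convex sets work but are unnecessarily heavy. The identity neatly converts the minimization condition together with convexity into a Cauchy estimate, after which everything follows from the standard closure/completeness facts of Hilbert space.
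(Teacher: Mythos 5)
Your proof is correct and is the standard argument (parallelogram identity on a minimizing sequence, then completeness and closedness for existence, and the same identity for uniqueness); the paper itself gives no proof but cites this as Theorem 4.10 of a functional analysis text, where exactly this argument appears. Nothing to add.
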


Another tool is the Cauchy-Schwartz-inequality which will be used to estimate the energy of a flow.

\subsection{The free and the wired current}

Given a $p$-$q$-flow $f$, let $I(f)$ denote its intensity.
If $f$ is induced by a potential $\rho$, then the \emph{potential
difference $U(f)$ between $p$ and $q$} is $\rho(p)-\rho(q)$.
There are two special flows called the free and the wired current.
For a detailed description see \cite{LyonsBook} or \cite{p-energy} where we generalize results of this paper to functions with finite $\ell^p$-norm.

The \emph{wired current ${\cal W}[G,r,p,q,I,U]$  between $p$ and $q$ with
intensity $I$} is the unique $p$-$q$-flow with intensity $I$ and minimal energy
in $(G,r)$. 
In fact the wired current also satisfies Kirchhoff's cycle law. The parameter
$U$ is the potential difference between $p$ and $q$ depending linearly on $I$.
The ratio $R_W:=\frac{U}{I}$ is called the \emph{wired
effective resistance between $p$ and $q$}. 

The \emph{free current  $  {\cal F}[G,r,p,q,I,U]$ between $p$ and $q$ with
voltage $U$} is induced by the unique 
potential with potential difference $U$ between $p$ and $q$ and minimal energy
in $(G,r)$.
In fact the free current is also a $p$-$q$-flow of intensity $I$ depending linearly on $U$.
The ratio $R_F:=\frac{U}{I}$ is called the \emph{free effective
resistance between $p$ and $q$}.
If it is clear by the context, we will omit some of the information $G,r,p,q,I,U$.

The wired and the free current are extremal in the following sense:

\begin{thm}
 [Doyle \cite{p-energy}, or \cite{LyonsBook}]
A connected locally finite network is in ${\cal O}_{HD}$
 if and only if 
${\cal F}[p,q,I]={\cal W}[p,q,I]$ for all vertices $p$ and $q$.
\end{thm}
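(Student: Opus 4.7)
The strategy rests on a single observation: at fixed intensity $I$, the antisymmetric function $\psi := {\cal W}[p,q,I] - {\cal F}[p,q,I]$ has finite energy, satisfies (K1) at every vertex (both summands are $p$-$q$-flows of intensity $I$, so the difference is a $0$-flow), and satisfies (K2) on every cycle (${\cal W}$ does by the paper's remark after its definition, and ${\cal F}$ does as the flow induced by a potential). Thus ``${\cal W}\ne {\cal F}$ for some $p,q$'' is equivalent, via integration of voltages along paths, to the existence of a \hdot. Throughout, write $f_\rho$ for the flow induced by a potential $\rho$ via $f_\rho(\vec e):=[\rho(init(\vec e))-\rho(ter(\vec e))]/r(e)$.

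For the direction ${\cal W}\ne{\cal F}$ implies $(G,r)\notin{\cal O}_{HD}$: fix a basepoint $v_0$ and define $h(v):=\sum_{\vec e\in\vec P} r(e)\psi(\vec e)$ along any directed path $\vec P$ from $v_0$ to $v$. Well-definedness rests on (K2) of $\psi$ extended to closed walks, as the paper remarks. Then $\psi=f_h$, so (K1) of $\psi$ at each vertex is exactly the mean-value property for $h$; finite energy of $\psi$ transfers to finite energy of $h$, and $\psi\ne 0$ forces $h$ non-constant.

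For the converse, given a \hdot\ $h$, pick $p,q$ with $U:=h(p)-h(q)\ne 0$, fix any $I>0$, and compare $\langle {\cal W},f_h\rangle$ and $\langle {\cal F},f_h\rangle$. Because $f_h$ is a $0$-flow (harmonicity of $h$ is (K1) for $f_h$), ${\cal W}+tf_h$ stays in the closed convex affine subspace of $p$-$q$-flows of intensity $I$, on which ${\cal W}$ is the unique minimizer by \autoref{hilb}; differentiating at $t=0$ gives $\langle {\cal W},f_h\rangle=0$. For the other inner product, write ${\cal F}=f_{\rho_F}$ and note $\langle f_\rho,f_{\rho'}\rangle=\sum_{e=\{a,b\}}c(e)(\rho(a)-\rho(b))(\rho'(a)-\rho'(b))$. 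The analogous minimality of $\rho_F$ via \autoref{hilb} yields $\langle f_{\rho_F},f_\eta\rangle=0$ for every potential $\eta$ with $\eta(p)=\eta(q)$; applying this with $\eta:=h-\rho_0$, for any finitely supported potential $\rho_0$ satisfying $\rho_0(p)=h(p)$ and $\rho_0(q)=h(q)$, reduces $\langle {\cal F},f_h\rangle$ to $\langle {\cal F},f_{\rho_0}\rangle$, which a summation by parts evaluates to $I(\rho_0(p)-\rho_0(q))=IU\ne 0$.

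I expect the only step requiring care to be the summation-by-parts identity $\langle f,f_{\rho_0}\rangle=I(\rho_0(p)-\rho_0(q))$ for a finite-energy $p$-$q$-flow $f$ of intensity $I$ and a finitely supported potential $\rho_0$: one rewrites the inner product as the edge-sum $\sum_e f(\vec e)(\rho_0(init)-\rho_0(ter))$, uses antisymmetry of $f$ to reorganize it as a vertex-sum $-\sum_v \rho_0(v)\sum_{\vec e:\,ter(\vec e)=v} f(\vec e)$, and observes that only the terms at $p$ and $q$ contribute (since $\rho_0$ has finite support and the accumulation of a $p$-$q$-flow vanishes off $\{p,q\}$). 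No convergence subtleties arise because $\rho_0$ has finite support, and everything else is two direct applications of \autoref{hilb} together with the orthogonality characterization of Hilbert-space minimizers.
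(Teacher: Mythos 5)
The paper does not actually prove this theorem: it is imported from Doyle and from Lyons and Peres and used as a black box, so there is no in-paper argument to compare yours against. Judged on its own, your proof is correct, and it is essentially the standard Hilbert-space argument underlying the cited result. The difference $\psi={\cal W}[p,q,I]-{\cal F}[p,q,I]$ is a finite-energy flow of intensity zero satisfying (K2), hence (integrating voltages along paths, which is path-independent by (K2) on closed walks and uses connectedness) the edge function induced by a harmonic function of finite energy, non-constant whenever $\psi\neq 0$; conversely, your two identities $\langle{\cal W},f_h\rangle=0$ and $\langle{\cal F},f_h\rangle=IU$ are exactly the first-order optimality conditions for the two variational characterizations, and the summation-by-parts step is sound because $\rho_0$ has finite support and $G$ is locally finite, so every rearranged sum is finite. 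Two small remarks. First, your argument inherits two facts that the paper itself only asserts in its ``in fact'' remarks: that the wired current satisfies (K2) and that the free current is a $p$-$q$-flow of intensity $I$ (equivalently, satisfies (K1) off $\{p,q\}$). Both follow from the same perturbation argument via \autoref{hilb} that you already use, but a fully self-contained proof should include them rather than quote them. Second, with the paper's sign convention $f_h(\vec e)=[h(init(\vec e))-h(ter(\vec e))]/r(e)$, integrating the voltages of $\psi$ from $v_0$ to $v$ yields $f_h=-\psi$ rather than $\psi$; this is harmless (replace $\psi$ by $-\psi$, or integrate towards $v_0$), but the sign should be fixed.
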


In the following, we will describe the free current as a limit of flows in
finite networks.
Having fixed an enumeration of the vertices, let $G[V_n]$ be the subgraph of $G$ induced
on the first $n$ vertices.
Note that we can force every $G[V_n]$ to be connected and assume that $n$ is so
big that $p,q\in G[V_n]$.
Fixing $U>0$, let $F_n$ be the unique $p$-$q$-flow in the finite network on
$G[V_n]$ with potential difference $U$.

It can be shown that $lim_{n\rightarrow \infty} F_n(\vec{e})= {\cal 
F}[U](\vec{e})$ for every edge $e$ and $lim_{n\rightarrow \infty} {\cal E}(F_n)=
{\cal E}({\cal  F}[U])$.
As $F_n$ is a $p$-$q$-flow in a finite network, there holds ${\cal
E}(F_n)=I(F_n) U$, see for example \cite{Biggs} (Proposition 18.1.). This
yields:

\begin{equation}\label{uri}
 {\cal E}({\cal F}[I,U])=I U \text{ and }  {\cal E}({\cal F}[I,U])=U^2/R_F     
\end{equation}

There is a similar description for the
wired current as a limit of flows in finite networks , see \cite{LyonsBook} (Proposition 9.2.).
As above, it can be shown that

\[
  {\cal E}({\cal W}[I,U])= I U  \text{ and }  {\cal E}({\cal W}[I,U])= U^2/R_W
\]

\section{Proof of the main result}\label{sec:chara}

A network is \emph{transient} if for some vertex $v$ there is a $v$-flow of non-zero intensity with finite energy.
This definition is equivalent to the common one using random walks; see \cite{WoessBook2009}, Theorem 4.51.
Let $G/A/B$ denote the graph obtained from $G$ by contracting each of $A$ and
$B$ to a vertex.  
Our main result is:

\newtheorem*{thm1}{\autoref{char}}

\begin{thm1}
 A connected locally finite network $(G,r)$ is not in ${\cal O}_{HD}$ if and only if 
there are  transient vertex-disjoint subnetworks $A$ and $B$ such that the contraction $G/A/B$ admits a potential $\rho$ of finite energy with $\rho(A)\neq\rho(B)$.
\end{thm1}

\begin{proof}[Proof of the forward implication of \autoref{char}.]
Let $h$ be a non-constant harmonic function of finite energy.
As usual, $h$ induces a function $h_E$ on the directed edges via
$h_E((e,v,w)):=\frac{h(v)-h(w)}{r(e)}$. 
Since $h$ is non-constant, there is a directed edge $\vec{d}$ with
$h_E(\vec{d})>0$. 
Define $a:=init(\vec{d})$, $b:=ter(\vec{d})$ and $I:= h_E(\vec{d})$.
Let $A$ be the graph induced by vertices $v$ lying on some finite directed path
$\vec{W}$ from $v$ to $a$ with $h(\vec{e})>0$ for all $\vec{e}\in \vec{W}$, see
\autoref{cycle}.

   \begin{figure}[htpb]
\begin{center}
   	  \includegraphics[width=7.5cm]{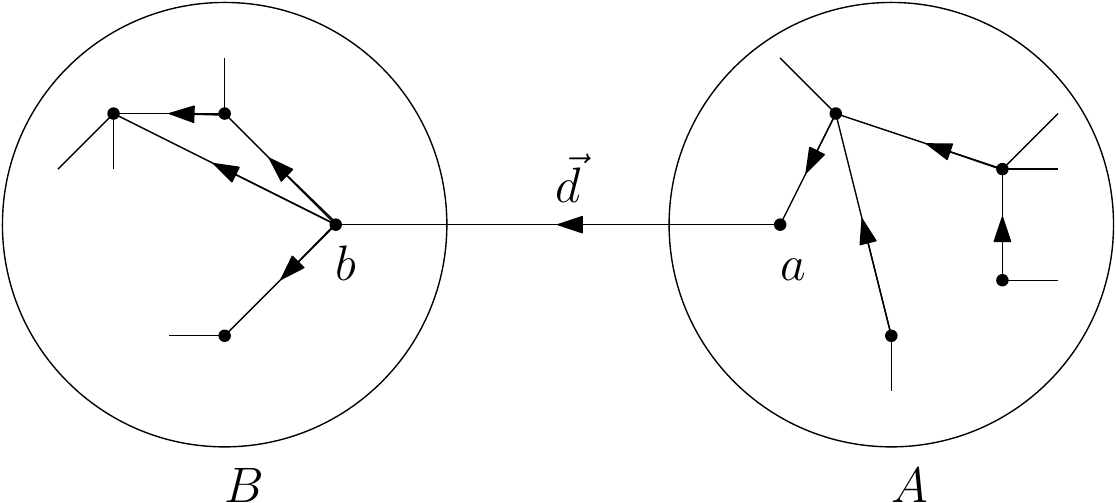}
   	  \caption{The construction of $A$. Only the edge-directions $\vec{e}$
with $h_E(\vec{e})> 0$ are drawn in this figure.}
   	  \label{cycle}
\end{center}
   \end{figure}

Our first task is to construct an $a$-flow of intensity $I$ in $A$ with finite energy,
beginning with the restriction $f'$ of $h_E$ to $A$, having accumulation at least $I$ at
$a$ and non-negative accumulation at every other vertex.
In order to obtain a function with accumulation exactly $I$ at $a$ and zero at
every other vertex, we apply \autoref{hilb} on the set of all antisymmetric
functions $g$ in $A$ with 

\begin{itemize}
\item $0\leq g(\vec{e})\leq h_E(\vec{e})$ if $h_E(\vec{e}) \geq 0$
 \item accumulation at least $I$ at $a$ and non-negative at every other vertex.
\end{itemize}

Note that $f'$ is in this set and the set is closed because all functions in
the set have energy at most ${\cal E}(h)$. \autoref{hilb} yields an element
$f$ with minimal energy. 
By minimality, $f$ has accumulation $I$ at $a$ and zero at every other
vertex. Thus $f$ witnesses that $A$ is transient.
Similarly the graph $B$ defined as the graph induced by the set of vertices $v$ lying on some finite directed path
$\vec{W}$ from $b$ to $v$ with $h(\vec{e})>0$ for all $\vec{e}\in \vec{W}$
is transient. The subnetworks $A$ and $B$ are disjoint because 
any vertex in $A\cap B$ is contained in a directed cycle $\vec{C}$ with
$h_E(\vec{c})>0$ for every $\vec{c}\in \vec{C}$, contradicting that $h_E$
satisfies (K2).

Having proved that $A$ and $B$ are transient, it remains to construct a potential $\rho$ of finite energy with $\rho(A)\neq \rho(B)$ in $G/A/B$.
Let $\bar{h}$ be the function obtained from $h$ by cutting off any values larger
than $h(a)$ and smaller than $h(b)$; more precisely, if $h(v)$ is bigger than
$h(a)$, we let $\bar{h}(v):=h(a)$ and 
if $h(v)$ is smaller than $h(b)$, we let $\bar{h}(v):=h(b)$. 
All other values are not changed.
 By the construction of $G/A/B$, the potential $\bar{h}$ is constant on every
contraction-set. So it defines a potential $\rho$ on $G/A/B$.
Since $\bar{h}$ has smaller energy than $h$ by construction, $\rho$ has finite
energy. 


\end{proof}

Before we can prove the converse direction, we need some intermediate results.

\begin{lem} \label{h_add2}
Let $G$ be a locally finite graph and let $r,r':E\rightarrow \mathbb{R}_{>0}$ be
resistance functions which differ only on finitely many edges. 
Then $(G,r)$ is in ${\cal O}_{HD}$ if and
only if $(G,r')$ is.
\end{lem}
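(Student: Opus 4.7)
The approach is to recognise that $\mathcal{O}_{HD}$-ness is captured by a Hilbert-space decomposition of the finite-energy potentials that is insensitive to the resistances on the finite set $S = \{e \in E(G) : r(e) \neq r'(e)\}$.

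Let $\mathcal{P}$ denote the space of functions $V(G) \to \mathbb{R}$ modulo constants whose energy is finite, equipped with the inner product $\langle h, g \rangle_r = \sum_e c(e)(h(u) - h(v))(g(u) - g(v))$. For every potential $h$ the two energies $\mathcal{E}_r(h)$ and $\mathcal{E}_{r'}(h)$ differ by $\sum_{e \in S}(c(e) - c'(e))(h(u)-h(v))^2$, a finite sum; so $\mathcal{P}$ is the same set of potentials whether one uses $r$ or $r'$, and the two norms it inherits are equivalent, hence induce the same topology on $\mathcal{P}$.

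Let $H_0 \subseteq \mathcal{P}$ be the closure of the span of the indicator functions $\{1_v : v \in V(G)\}$; because the topology does not depend on whether we use $r$ or $r'$, the subspace $H_0$ is the same in both cases. A direct computation yields $\langle h, 1_v \rangle_r = -\sum_{e=\{v,w\}} c(e)(h(w) - h(v))$, so the $r$-inner product of $h$ with $1_v$ is, up to sign, the weighted Laplacian of $h$ at $v$. Consequently the $r$-orthogonal complement $H_0^{\perp_r}$ inside $\mathcal{P}$ coincides with the subspace $HD_r$ of finite-energy $r$-harmonic potentials modulo constants; similarly $H_0^{\perp_{r'}} = HD_{r'}$.

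By standard Hilbert-space theory, the quotient map induces isomorphisms $HD_r \cong \mathcal{P}/H_0 \cong HD_{r'}$, so $HD_r = \{0\}$ iff $H_0 = \mathcal{P}$ iff $HD_{r'} = \{0\}$. Since $(G, r) \in \mathcal{O}_{HD}$ means exactly that $HD_r = \{0\}$, the lemma follows. The main obstacle is that the excerpt develops Hilbert-space machinery only for antisymmetric flows, not for potentials, so the Royden-type orthogonal decomposition $\mathcal{P} = H_0 \oplus_r HD_r$ used above must either be imported from the literature (e.g.\ Soardi's book) or set up explicitly using \autoref{hilb} applied to the closed subspace $H_0 \subseteq \mathcal{P}$ together with the orthogonality computation against $1_v$.
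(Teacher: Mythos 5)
Your argument is correct, but it takes a genuinely different route from the paper. The paper's proof is hands-on: it reduces to the case where $r$ and $r'$ differ on a single edge $e=pq$, takes a non-constant finite-energy harmonic function $h$ of $(G,r)$, and repairs its failure of harmonicity (which can only occur at $p$ and $q$) by subtracting a suitable multiple $If$ of the potential $f$ inducing the free current ${\cal F}[r',p,q,U=1]$; the observation that this free current is the same for $r$ and $r'$ (the two energy functionals differ by a constant on the affine set of potentials with $\rho(p)-\rho(q)=1$) is then used to see that $h-If$ is still non-constant. You instead pass through the Royden decomposition of the Dirichlet space: the two energy norms are equivalent (indeed $|{\cal E}_r(h)-{\cal E}_{r'}(h)|\leq M\,{\cal E}_r(h)$ with $M=\max_{e\in S}|c(e)-c'(e)|/c(e)$, finite because $S$ is finite), so $H_0$ is the same closed subspace for both and $HD_r=H_0^{\perp_r}=\{0\}$ iff $H_0={\cal P}$ iff $HD_{r'}=\{0\}$. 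This is cleaner and strictly more general --- it proves the conclusion whenever the ratios $r/r'$ and $r'/r$ are bounded, not only when they agree off a finite set --- but it relies on two ingredients the paper never develops: completeness of the space of finite-energy potentials modulo constants (for connected $G$), and the identification $H_0^{\perp}=HD$, which you correctly reduce to the computation $\langle h,1_v\rangle_r=-\sum_{e=\{v,w\}}c(e)\bigl(h(w)-h(v)\bigr)$. You flag this gap yourself; citing Soardi or Lyons--Peres for the decomposition, or carrying out the (routine) completeness argument, would close it. One advantage of the paper's more pedestrian proof is that essentially the same argument extends to resistances taking the values $0$ and $\infty$, i.e.\ to contracting and deleting finitely many edges, which is exactly what is needed later for \autoref{h_del}.
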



\begin{proof}
By symmetry, it is sufficient to prove one direction.
It suffices to prove the assertion when $e=pq$ is the only edge with $r(e)\neq
r'(e)$, for applying this recursively, once for each edge $e$ with $r(e)\neq
r'(e)$, yields the general case.

Let $h$ be a non-constant harmonic function of finite energy in $(G,r)$. 
The desired harmonic function in $(G,r')$ will be constructed as a difference of
 two potentials of  $p$-$q$-flows.
The first one is $h$ considered as a potential in $(G,r')$.
The second is a multiple of the potential $f$ that induces the free current
${\cal F}[r',p,q,U=1]$: 
note that there is a real number $I$, depending linearly on $h(p)-h(q)$, such
that $h-I f$ is harmonic in $(G,r')$. Since $h$ and $I f$ have finite energy,
$h-If$ has finite energy, too.
As  ${\cal F}[r',p,q,U=1]={\cal F}[r,p,q,U=1]$, the difference $h-If$ is
non-constant  in $(G,r)$ and thus non-constant  in $(G,r')$, as well.

\end{proof}

With a similar proof one can strengthen the above Lemma, allowing $r$ and $r'$ 
to assume  the value zero and infinity. This has the same effect as contracting
and deleting edges.
In order to be able to do so, we need to impose the additional requirement that
the edges with infinite resistance  do not separate the graph, see \autoref{h_del}.
One can also state this stronger version of \autoref{h_add2} for non-elusive
harmonic functions. 
In that case the additional requirement is not needed if we consider harmonic
functions being non-constant in at least one connectedness-component.

After doing the calculation of the proof of the backward implication of
\autoref{char} in the following \autoref{cri_pot}, we will prove the  backward
implication of \autoref{char}.

\begin{prop}\label{cri_pot}
Let $\rho$ be a potential of finite energy in a connected locally finite network
$(G,r)$ with $\rho(p)-\rho(q)= U$  for some $p,q\in V$ and $U>0$.
Then for all $n\in \mathbb{N}$ and $I>0$ 
there exists a finite edge set $D$ and a resistance function $r_D$ with
$r_D|_{G-D}=r|_{G-D}$ such that ${\cal E}({\cal F}[r_D,p,q,I])\geq n$.
Moreover, we can choose $D$ disjoint from the set of edges $vw$ with
$\rho(v)=\rho(w)$.
\end{prop}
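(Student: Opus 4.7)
The plan is to turn the question into making the $r_D$-energy of the given potential $\rho$ arbitrarily small on the altered network $(G,r_D)$. Combining the two identities ${\cal E}({\cal F}[I,U])=IU$ and ${\cal E}({\cal F}[I,U])=U^2/R_F$ from (\ref{uri}), one gets ${\cal E}({\cal F}[r_D,p,q,I])=I^2 R_F[r_D,p,q]$. On the other hand, by the defining minimality property of the free current, ${\cal E}_{r_D}({\cal F}[r_D,p,q,U])\le {\cal E}_{r_D}(\tilde\rho)$ for every potential $\tilde\rho$ with $\tilde\rho(p)-\tilde\rho(q)=U$. Applied with $\tilde\rho:=\rho$ (which has voltage $U$ between $p$ and $q$ by hypothesis), and combined again with $U^2/R_F={\cal E}_{r_D}({\cal F}[r_D,p,q,U])$, this yields
\[
 {\cal E}({\cal F}[r_D,p,q,I])\;=\;I^2\,R_F[r_D,p,q]\;\ge\;\frac{I^2U^2}{{\cal E}_{r_D}(\rho)}.
\]
Hence the proposition follows as soon as we exhibit a finite $D$ avoiding the flat edges of $\rho$, and a resistance function $r_D$ with $r_D|_{G-D}=r|_{G-D}$, such that ${\cal E}_{r_D}(\rho)\le I^2U^2/n$.

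The construction of $D$ and $r_D$ is then routine bookkeeping. Since ${\cal E}_r(\rho)<\infty$ and edges $e=vw$ with $\rho(v)=\rho(w)$ contribute $0$ to the energy regardless of their resistance, for any $\varepsilon>0$ we may pick a finite set $D$ of \emph{non-flat} edges of $\rho$ with $\sum_{e\notin D}(\rho(v)-\rho(w))^2/r(e)<\varepsilon$. Next, on $D$ set $r_D(e):=M$ for a large constant $M$ and keep $r_D(e):=r(e)$ off $D$. Then
\[
 {\cal E}_{r_D}(\rho)\;=\;\sum_{e\notin D}\frac{(\rho(v)-\rho(w))^2}{r(e)}\;+\;\frac{1}{M}\sum_{e\in D}(\rho(v)-\rho(w))^2\;<\;\varepsilon+\frac{C}{M},
\]
where $C$ is a finite constant depending only on $D$ and $\rho$. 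Choosing $\varepsilon$ and $1/M$ small enough makes the right-hand side smaller than any prescribed bound, in particular at most $I^2U^2/n$.

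I do not anticipate any real obstacle here: the argument rests on two elementary points, namely the variational characterization of the free current (which is essentially its definition) and the fact that the energy of any fixed finite-energy potential on a locally finite network can be driven to zero by restricting to a cofinite set of edges and then inflating the remaining finitely many resistances. The side condition that $D$ be disjoint from the flat edges of $\rho$ is automatic from the above choice, since such edges neither contribute to the tail sum nor benefit from inflating $r$, so they may be freely omitted from $D$.
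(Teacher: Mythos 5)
Your proposal is correct and follows essentially the same route as the paper: both arguments make the $r_D$-energy of $\rho$ arbitrarily small by choosing a finite set $D$ of non-flat edges capturing all but a tiny tail of the energy and then inflating the resistances on $D$, and both then invoke the energy-minimizing property of the free current together with the identities in (\ref{uri}) to bound ${\cal E}({\cal F}[r_D,p,q,I])$ from below. The only cosmetic difference is that you phrase the final step via the effective resistance $R_F$ while the paper rescales through the intensity $I({\cal F}[r_D,U])$; these are the same algebraic manipulation.
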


The idea of the proof of \autoref{cri_pot} is to make the resistances in $D$ so
large that the free effective resistance $R$ between $p$ and $q$ gets as large
as desired.
Thus ${\cal E}({\cal F}[r_D,p,q,I])=R I^2$ can be made as large as desired.

\begin{proof} Given $n,I$ and $U$, choose $\epsilon$ so small that
$\frac{U^2}{\epsilon} I^2\geq n$.
First of all, we define $D$ and $r_D$ so that $\rho$ has energy less than
$\epsilon$ in $(G,r_D)$.
Recall that the energy of $\rho$ in $(G,r)$ is
${\cal E}(\rho)=\sum_{vw\in E(G)} \frac{(\rho(v)-\rho(w))^2}{r(vw)}$.
Thus we can choose $D$ so large that the energy of $\rho$ in $(G-D,r|_{G-D})$ is
less than $\frac{\epsilon}{2}$. 
Note that we can choose $D$ disjoint from the set of edges $vw$ with
$\rho(v)=\rho(w)$.
As required, we set $r_D|_{G-D}=r|_{G-D}$.
To force the energy of $\rho$  to be less than $\epsilon$ in $(G, r_D)$, 
we choose $r_D$ on $D$ so large that the energy of $\rho$ in $(D,r_D|_{D})$ is
less than $\frac{\epsilon}{2}$.

Having defined $D$ and $r_D$, it remains to calculate the energy of the free
current ${\cal F}[r_D,I]$.
The definition of the free current yields
$
 {\cal E}({\cal F}[r_D,U])\leq \epsilon.
$
By \autoref{uri}, we obtain for the intensity of ${\cal F}[r_D,U]$ that
$
I({\cal F}[r_D,U])= \frac{{\cal E}({\cal F}[r_D,U])}{U} \leq
\frac{\epsilon}{U}. 
$
This yields:

\[
 {\cal E}({\cal F}[r_D,I])
= {\cal E}({\cal F}[r_D,U]) \frac{I^2}{I({\cal F}[r_D,U]) ^2}
=U\frac{I^2}{I({\cal F}[r_D,U]) } \geq 
\frac{U^2}{\epsilon} I^2\geq n.
\]

\end{proof}

We can now put the above tools together to prove the remaining part of
\autoref{char}.

\begin{proof}[Proof of the backward implication of \autoref{char}.] 
As $A$ and $B$ are transient, 
for some $a\in A$, $b\in B$, there are an $a$-flow $f_a$ of finite energy with intensity $I>0$
and a $b$-flow $f_b$ of finite energy with the same intensity $I$, which we extend both with the value zero to functions on $\vec{E}$.
Then $f:=f_b-f_a$ is an $a$-$b$-flow of intensity $I$ being zero on $E-E(A)-E(B)$.
Furthermore there is a potential $\rho$ of finite energy with $U:=\rho(A)-\rho(B)>0$ in $G/A/B$.

Finding a harmonic function in $(G,r)$ directly might be quite hard, 
instead 
we will manipulate the resistances using \autoref{cri_pot} such that we can find
a harmonic function in the manipulated network, and then we apply
\autoref{h_add2} to deduce that $(G,r)$ also admits a harmonic function.

In order to apply \autoref{cri_pot},
we extend $\rho$ to a potential $\rho'$ in $G$ by assigning the value of the
contraction set to all vertices in the set. Since $\rho$ has finite energy,
$\rho'$ does.
Thus \autoref{cri_pot}  yields for $(G,r)$, $\rho'$ and  $n> \ {\cal E}(f)$   a
set of edges $D$ and an assignment $r_D$ such that $ {\cal E}({\cal F}[r_D,I])
>{\cal E}(f)$.
Since $f$ is zero on $D$, $f$ is an $a$-$b$-flow in $(G,r_D)$. Therefore 
$ {\cal E}({\cal F}[r_D,I]) >{\cal E}(f)\geq {\cal E}({\cal W}[r_D,I])$. 
So ${\cal F}[r_D,I] - {\cal W}[r_D,I]$ is a non-constant harmonic function of
finite energy in $(G,r_D)$, giving rise to one in $(G,r)$ by \autoref{h_add2}.
\end{proof}

\section{Consequences of \autoref{char}}\label{sec:cor}

In this section we will derive further consequences from \autoref{char}.

\subsection{Networks not in ${\cal O}_{HD}$}

The following \autoref{r_fin} offers an example application of \autoref{char},
where the subnetworks $A$, $B$ and the potential $\rho$ can be constructed explicitly using the
properties of the graph. Its special case of unit resistances was already treated in \cite{SoardiBook}, Theorem 4.20.

\newtheorem*{cor1}{\autoref{r_fin}}

\begin{cor1}
 Let $(G,r)$ be a connected locally finite network.
If $G$ has a cut $F$ such that  $\sum_{e\in F} 1/r(e)$ is finite, and there are
two components of $G-F$ each containing a transient network,
then $(G,r)$ is not in ${\cal O}_{HD}$.
\end{cor1}

   \begin{figure}[htpb]
\begin{center}
   	  \includegraphics[width=5cm]{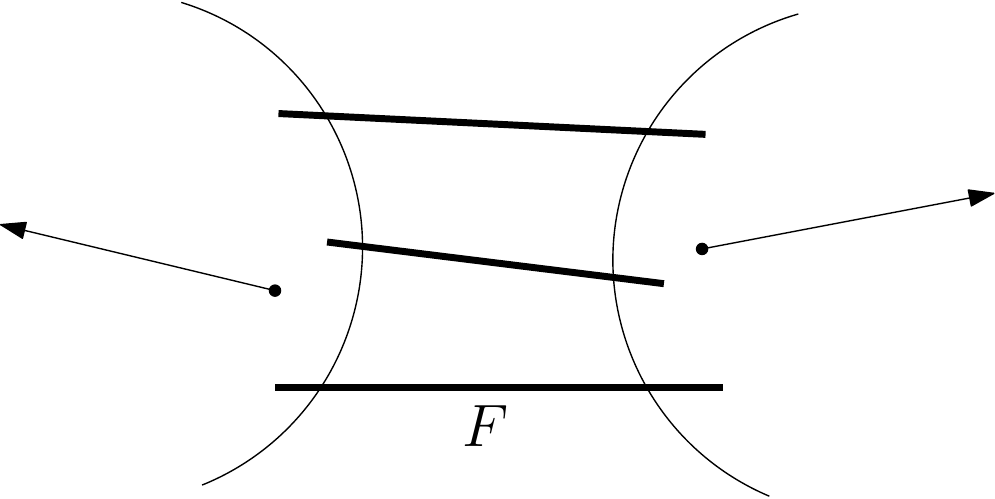}
   	  \caption{The situation of \autoref{r_fin}. The cut $F$, drawn thick,
separates the transient networks.}
   	  \end{center}
   \end{figure}

\begin{proof}
Pick for both $A$ and $B$ one of the above transient networks.
The potential $\rho$ is defined as follows: it assigns the value $1$ to every
vertex of the component of $G/A/B-F$ containing $A$, and zero to every other
vertex.
Recall that the energy of the potential $\rho$ is $\sum_{\{v,w\}\in E}
\frac{(\rho(v)-\rho(w))^2}{r(e)}$.
As $\sum_{e\in F} 1/r(e)$ is finite, $\rho$ has finite energy.
Thus \autoref{char} yields the assumption.
\end{proof}

\subsection{Networks in ${\cal O}_{HD}$}

In several occasions \autoref{char} can also be used in the other direction, to
prove that a network is in  ${\cal O}_{HD}$.
This is done in the following Corollaries \ref{w_barri} and \ref{cor_i}, which
we describe qualitatively at first. For simplicity, all edges have the
resistance $1$. 
Note that every infinite locally finite graph $G$ contains a sequence
$S_1,S_2,...$ of subgraphs such that $G-S_{n+1}$ has a finite component 
$C_i$ containing  $G[\bigcup_{i=1}^n S_i]$, see \autoref{fig:cors}.

   \begin{figure}[htpb]
\begin{center}
   	  \includegraphics[width=5cm]{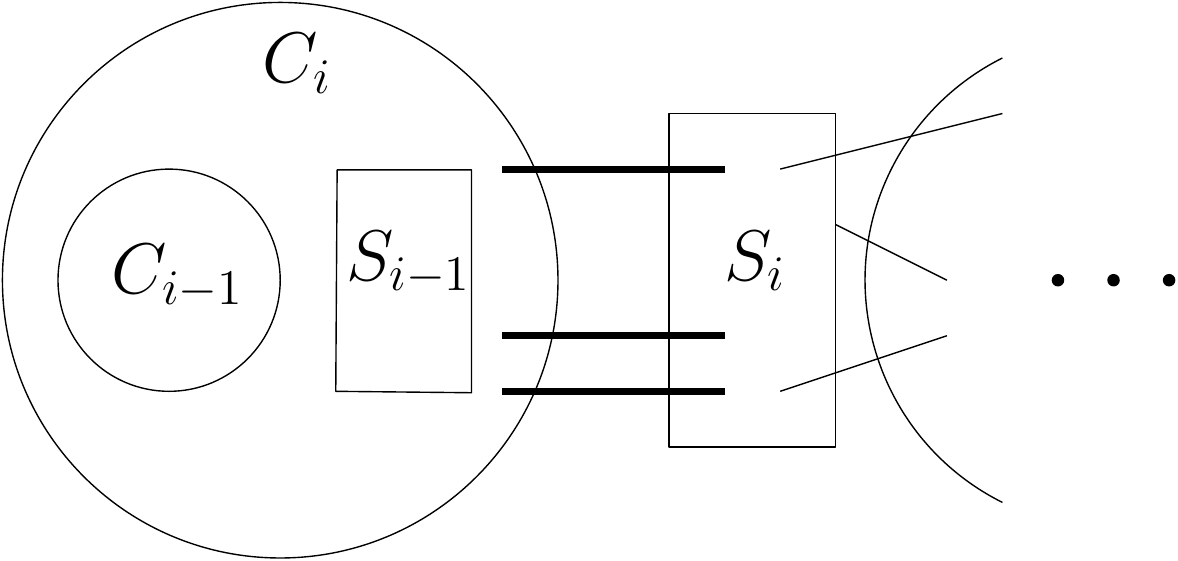}
   	  \caption{The separators $S_i$ and the finite components $C_i$. The
edges from $C_i$ to $S_i$ are drawn thick.
}
\label{fig:cors}
\end{center}
   \end{figure}

\autoref{cor_i} states that if there are only few edges from $C_i$ to $S_i$ for
sufficiently many $i$, then $G$ is in ${\cal O}_{HD}$.
In addition to that, \autoref{w_barri} states that if the graph-diameter of
$S_i$ is small for sufficiently many $i$, then $G$ is in ${\cal O}_{HD}$.

\sloppy
\begin{cor}[Thomassen \cite{thomassenCurrents2}] \label{w_barri}
 Let $(G,r)$ be a connected locally finite network with $r(e)=1$ for every edge
$e$.
Suppose $G$ contains infinitely many vertex-disjoint finite connected subgraphs $S_1,S_2,...$
such that $G-S_{n+1}$ has a finite component containing  $G[\bigcup_{i=1}^n
S_i]$.
If $\sum 1/diam(S_i)=\infty$, then $(G,r)$ is in ${\cal O}_{HD}$.
\end{cor}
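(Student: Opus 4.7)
The plan is to prove the contrapositive: assuming $(G,r)\notin{\cal O}_{HD}$, I will deduce that $\sum_i 1/\mathrm{diam}(S_i)<\infty$. Applying \autoref{char} gives transient vertex-disjoint subnetworks $A$ and $B$ together with a finite-energy potential $\rho$ on $G/A/B$ with $\rho(A)\neq\rho(B)$. Replacing $A$ and $B$ by transient (hence infinite) connected components of themselves, I may assume both are infinite and connected. I then extend $\rho$ to a function $\rho':V(G)\to\mathbb{R}$ with $\rho'\equiv\rho(A)$ on $A$ and $\rho'\equiv\rho(B)$ on $B$; this extension has the same finite energy as $\rho$, and after rescaling I arrange $\rho'(A)-\rho'(B)=1$.

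The decisive step is an exhaustion observation: every vertex $v\in V(G)$ lies in $C_n$ for all sufficiently large $n$, where $C_n$ denotes the finite component of $G-S_{n+1}$ containing $\bigcup_{i\leq n}S_i$. Indeed, any path in $G$ from $v$ to a vertex of $S_1$ is finite and therefore meets only finitely many of the vertex-disjoint $S_i$; for $n$ larger than the maximum index used, the path lies entirely in $G-S_{n+1}$ and witnesses $v\in C_n$. Applied to a fixed $a_0\in A$, this gives $a_0\in C_n$ for every $n\geq N_A$. Since $A$ is infinite while $C_n$ is finite, $A\setminus C_n\neq\emptyset$, and by connectedness of $A$ any path in $A$ from $a_0$ to a vertex outside $C_n$ must cross the separator $S_{n+1}$. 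Hence $A\cap S_{n+1}\neq\emptyset$ for all $n\geq N_A$, and the same argument gives $B\cap S_{n+1}\neq\emptyset$ for all $n\geq N_B$.

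For each $i$ exceeding $\max(N_A,N_B)+1$, fix $u_i\in A\cap S_i$ and $v_i\in B\cap S_i$ together with a shortest $u_i$-$v_i$ path $P_i$ in the connected subgraph $S_i$, so that $|P_i|\leq\mathrm{diam}(S_i)$. Since all resistances equal $1$, Cauchy-Schwarz gives
\[
1=(\rho'(u_i)-\rho'(v_i))^2=\Bigl(\sum_{xy\in P_i}(\rho'(x)-\rho'(y))\Bigr)^2\leq\mathrm{diam}(S_i)\sum_{xy\in P_i}(\rho'(x)-\rho'(y))^2.
\]
Vertex-disjointness of the $S_i$ makes the $P_i$ pairwise edge-disjoint subgraphs of $G$, so summing these lower bounds yields
\[
{\cal E}(\rho')\geq\sum_{i\text{ large}}\frac{1}{\mathrm{diam}(S_i)}.
\]
Since ${\cal E}(\rho')<\infty$, the tail of $\sum 1/\mathrm{diam}(S_i)$ converges, and hence so does the whole series, contradicting the hypothesis.

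The main subtlety will be the exhaustion lemma and its consequence that both $A$ and $B$ meet $S_i$ for cofinitely many $i$; without it, one of $A$ or $B$ could a priori avoid all separators, $\rho'$ could be constant on each $S_i$, and the Cauchy-Schwarz lower bound would disappear. Everything else is bookkeeping and a standard energy estimate.
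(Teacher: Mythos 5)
Your proof is correct and follows essentially the same route as the paper's: apply \autoref{char}, extend $\rho$ to a finite-energy potential $\rho'$ on $G$, use the exhaustion structure and the infinitude of connected transient pieces of $A$ and $B$ to find vertices of both in cofinitely many $S_i$, and lower-bound the energy of $\rho'$ on each $S_i$ by $U^2/\mathrm{diam}(S_i)$. The only cosmetic difference is that the paper obtains this per-separator bound by comparing with the free current ${\cal F}[S_i,a_i,b_i,U]$ and its effective resistance, whereas you derive the same inequality directly via Cauchy--Schwarz along a shortest $u_i$-$v_i$ path in $S_i$.
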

\fussy

Here $diam(S_i)$ is the graph-diameter of $S_i$. Lyons and Peres
\cite{LyonsBook} proved a generalization of \autoref{w_barri} to arbitrary resistances which is proved by \autoref{char} similarly.

\begin{proof}
Assume there is a non-constant harmonic function of finite energy in $G$: 
\autoref{char} yields transient vertex-disjoint subnetworks $A$ and $B$ and a potential $\rho$ of finite energy with $\rho(A)\neq \rho(B)$.
By extending the value of the contraction set to all vertices of the set, $\rho$
defines a potential $\rho'$ on $G$, having finite energy.

Our aim is to show that $\rho'$ has infinite energy, which yields the desired
contradiction.
For this, it will be useful to find vertices $a_i\in A\cap S_i$ and $b_i\in B\cap S_i$ for all but
finitely many $i$.

Let us start finding these vertices. 
Let $A'$ be an infinite connected component of $A$ and pick $a\in A'$.
Let $n_a$ be the distance in $G$ between $a$ and $S_1$. 
As $G-S_{n+1}$ has a finite component containing  $G[\bigcup_{i=1}^n S_i]$ and
the subgraphs $S_i$ are disjoint, 
it follows for all $j\geq n_a$ that the vertex $a$ is contained in the finite
component of $G-S_{j+1}$ containing $S_1$. 
Thus the connected infinite set $A'$ contains a vertex $a_{j+1}$ of the separator
$S_{j+1}$. We define $B', n_b$ and $b_{j+1}$ analogously for $b$ instead of $a$.

Define $U:= \rho'(A)-\rho'(B)$ and ${\cal E}(\rho'|S_i):=\sum_{\{v,w\}\in S_i}
\frac{(\rho'(v)-\rho'(w))^2}{r(vw)}$.
Having proved for all $i\geq m:=max\{n_p,n_q\}+1$ that there are $a_i\in A\cap
S_i$ and $b_i\in B\cap S_i$, we calculate:

\[
 {\cal E}(\rho') \geq \sum_{i\geq m} {\cal E}(\rho'|S_i)\geq  \sum_{i\geq m}
{\cal E}({\cal F}[S_i, a_i,b_i, U])\geq U^2 \sum_{i\geq m} 1/diam(S_i)=\infty
\]

as desired.
\end{proof}

For the next corollary, we need the following definition:
Given a subgraph $C_i$ of $G$, we let ${\bf RN}(C_i)$ denote the
\emph{resistance neighborhood of $C_i$}, which is defined as $\sum
\frac{1}{r(e)}$, summing over all edges $e$ having one end-vertex in $C_i$ and
one outside.
In the case where all resistances are $1$, the number ${\bf RN}(C_i)$ is the
size of the neighborhood of $C_i$.
If a network is not in  ${\cal O}_{HD}$, then by \autoref{char} it contains a transient subnetwork, witnessing that the network itself is transient.
Thus the Nash-Williams-criterion \cite{LyonsBook} for not transient graphs yields:

\begin{cor}\label{cor_i}
 Let $(G,r)$ be a connected locally finite network.
Suppose $G$ contains infinitely many vertex-disjoint finite connected subgraphs $S_1,S_2,...$
such that $G-S_{n+1}$ has a finite component $C_i$ containing 
$G[\bigcup_{i=1}^n S_i]$.
If $\sum \frac{1}{{\bf RN}(C_i)}=\infty$, then $(G,r)$ is in ${\cal O}_{HD}$.
\end{cor}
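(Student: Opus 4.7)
My plan is to follow the hint preceding the statement: prove the contrapositive by combining \autoref{char} with the Nash-Williams recurrence criterion. Suppose for contradiction that $(G,r)$ is not in ${\cal O}_{HD}$. Then \autoref{char} supplies vertex-disjoint transient subnetworks inside $G$; extending the witnessing $v$-flow of finite energy by $0$ on the remaining directed edges yields a $v$-flow of finite energy and positive intensity in $(G,r)$ itself, so $(G,r)$ is transient. The task is then to force recurrence from the divergence hypothesis, producing a contradiction.

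To apply Nash-Williams I would use the edge-cuts $\Pi_i := \partial C_i$, the set of edges with exactly one endpoint in $C_i$. Fixing a basepoint $o \in S_1 \subseteq C_i$, each $\Pi_i$ is a finite edge cut separating $o$ from infinity because $C_i$ is finite while $G$ is infinite and connected. Crucially, because $C_i$ is an entire component of $G - S_{i+1}$, every edge of $\Pi_i$ has its external endpoint in $S_{i+1}$, and therefore
\[
  \sum_{e \in \Pi_i} c(e) \;=\; \sum_{e \in \partial C_i} \tfrac{1}{r(e)} \;=\; {\bf RN}(C_i).
\]
Hence the hypothesis $\sum_i 1/{\bf RN}(C_i) = \infty$ is precisely the Nash-Williams divergence condition for the sequence $(\Pi_i)$.

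The one point requiring care, and the main obstacle, is pairwise disjointness of the cuts $\Pi_i$. For $i < j$ and $e \in \Pi_i$, one endpoint of $e$ lies in $C_i \subseteq C_j$ (using the nesting of the $C_k$) and the other in $S_{i+1} \subseteq \bigcup_{k \le j} S_k \subseteq C_j$; hence both endpoints of $e$ lie in $C_j$, so $e$ is internal to $C_j$ and not in $\Pi_j$. With disjointness verified, the Nash-Williams criterion gives that $(G,r)$ is recurrent, contradicting the transience derived above. This contradiction forces $(G,r) \in {\cal O}_{HD}$.
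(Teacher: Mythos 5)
Your proof is correct and follows exactly the route the paper takes: the paper disposes of this corollary in two sentences (``not in ${\cal O}_{HD}$ implies a transient subnetwork by \autoref{char}, hence $G$ is transient; Nash--Williams then gives the contradiction''), and you have simply supplied the details it leaves implicit, namely the extension of the witnessing flow by zero, the identification of $\sum_{e\in\partial C_i}c(e)$ with ${\bf RN}(C_i)$, and the pairwise disjointness of the cuts. The only point you assert without proof is the nesting $C_i\subseteq C_j$ for $i<j$, but this follows easily (if $C_i$ met $S_{j+1}$ then, being connected and disjoint from $S_{j+1}$'s separator $S_{i+1}$, it would swallow $S_{j+1}$ and force $C_j$ to contain an infinite component of $G-S_{i+1}$), so this is a negligible omission.
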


The special case of unit resistances was treated by Thomassen in \cite{thomassenCurrents}.

\subsection{${\cal O}_{HD}$ and the deletion of edges}

The following result extends the well-known fact \cite{agelos_unique_flow} that a network $(G,r)$ with
$\sum_{e\in E(G)} 1/r(e)<\infty$ is in  ${\cal O}_{HD}$.
With a light abuse of notation, let $G-S$ denote the graph obtained from $G$ by deleting the set of edges $S$ and then all isolated vertices.

\newtheorem*{cor3}{\autoref{G-S}}
\begin{cor3}
Let $(G,r)$ be a connected locally finite network, and let $S$ be a set of edges 
such that $G-S$ is connected and $\sum_{e\in S} 1/r(e)$ is finite.
The network $(G-S,r)$ is in ${\cal O}_{HD}$ if and only if $(G,r)$ is.
\end{cor3}

The condition that $\sum_{e\in S} 1/r(e)$ is finite is best possible in the
following strong sense.
Given any set $S$ with $\sum_{e\in S} 1/r(e)=\infty$, there is a network
$N_1=(G,r)$ that is in ${\cal O}_{HD}$ but $(G-S,r)$ is not.
The converse is also true: 
given any set $S$ with $\sum_{e\in S} 1/r(e)=\infty$, 
there is a network
$N_2=(G,r)$ that is not in ${\cal O}_{HD}$ but $(G-S,r)$ is.
In particular, the best possible terms for both directions of the upper theorem
agree.

In the following, we construct $N_1$ and $N_2$, starting with $N_1$.
Letting $(G-S,r)$ be a double ray of which the resistances sum up to $1$, ensures by \autoref{r_fin} that $(G-S,r)$ is not in ${\cal O}_{HD}$.
We attach the edges of $S$ to the double ray so that the graph $G$ is an
infinite ladder 
and every edge of $S$ is a rung of that ladder, see \autoref{fig:bsp1}.
With \autoref{satz}, proved in Section \ref{sec:geo},
it is straightforward to check that $G$ is in ${\cal O}_{HD}$.

   \begin{figure}[htpb]
\begin{center}
   	  \includegraphics[width=5cm]{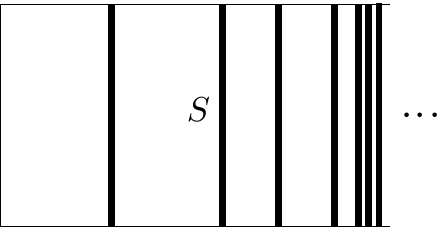}
   	  \caption{The network $(G,r)$ where the set $S$ is thick.}
   	  \label{fig:bsp1}
\end{center}
   \end{figure}

Having constructed $N_1$, we now construct $N_2$.
Letting $G-S$ be the infinite ladder and choosing the resistances so that
$\sum_{e\in E} 1/r(e)=1$, 
ensures $(G-S,r)$ is in ${\cal O}_{HD}$ by
\autoref{G-S} or \autoref{cor_i}.

Thus it remains to attach the set $S$ so that $(G,r)$ is not in ${\cal O}_{HD}$, which is
done as follows:
as $\sum_{e\in S} 1/r(e)=\infty$, we can partition $S$ into finite sets $H_{i}$,
where $i\in \mathbb{N}$, so that $\sum_{e\in H_i} 1/r(e)\geq 2^{i}$.
Let $e_0,e_1,...$ be any enumeration of the horizontal edges of the ladder.
For every edge $e_i$, we attach each edge of $H_i$ between the end-vertices of
$e_i$, see \autoref{fig:bsp2}.
This has the same effect as assigning a resistance smaller than $2^{-i}$ to the
edge $e_i$.
Thus by \autoref{r_fin} the network $(G,r)$ is not in ${\cal O}_{HD}$.

   \begin{figure}[htpb]
\begin{center}
   	  \includegraphics[width=5cm]{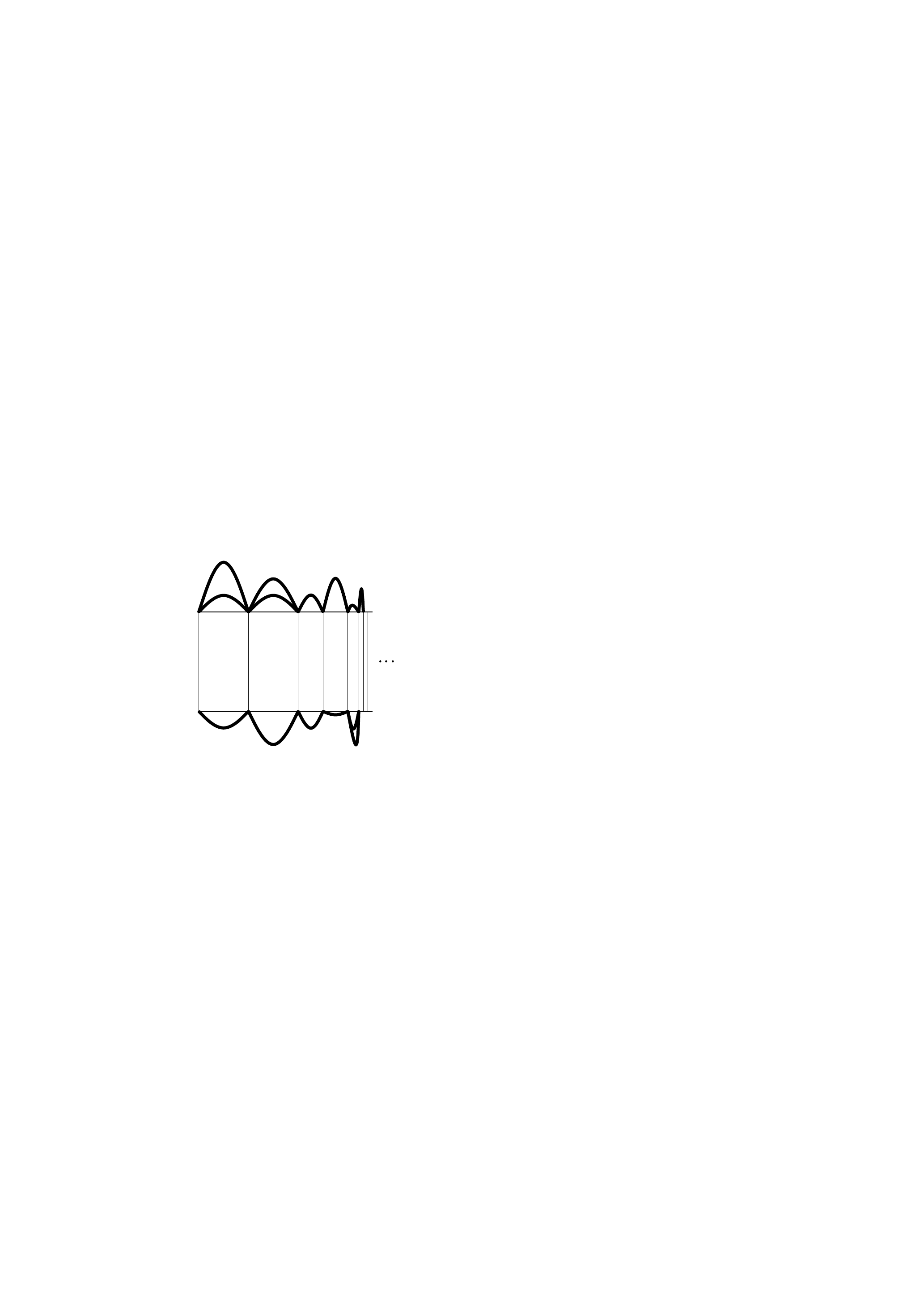}
   	  \caption{The set $S$, drawn thick, attached to the infinite ladder.}
   	  \label{fig:bsp2}
\end{center}
   \end{figure}


Having seen that \autoref{G-S} is best possible, we proceed with its proof.

\begin{proof}[Proof of the forward implication of \autoref{G-S}.] 
Our aim is to find transient vertex-disjoint subnetworks $A$ and $B$ and a potential $\rho'$ of finite energy with $\rho'(A)\neq \rho'(B)$ in $G/A/B$ to apply \autoref{char} in $G$.
Applying \autoref{char} in $G-S$ yields the desired $A$ and $B$ and a potential $\rho$ of finite energy with $\rho(A)< \rho(B)$ in $G/A/B-S$.
Define the potential $\rho'$ via:

\[
\rho'(v):=
\begin{cases}
    \rho(v) \text{ if } \rho(A)\leq \rho(v) \leq \rho(B) \\
    \rho(A) \text{ if } \rho(v) \leq \rho(A) \\
    \rho(B) \text{ if } \rho(B)\leq \rho(v) \\
    \rho(A) \text{ if } v\notin G-S
  \end{cases} 
\]

As $\rho'(A)\neq \rho'(B)$, it remains to check that $\rho'$ has finite energy:
its energy is at most that of $\rho$ plus the energy on the edges of $S$ which is at most
$P^2 \sum_{e\in S} 1/r(e)$, where $P:=|\rho'(A)-\rho'(B)|$.
This completes the proof.
\end{proof}

Before we can prove the converse direction, we need some intermediate results.
The following \autoref{h_del} is \autoref{G-S} specialized to the case that $S$
is finite and is proved similar to \autoref{h_add2}.

\begin{lem} \label{h_del}
Let $(G,r)$ be a connected locally finite network and let $S$ be a finite
set of edges such that $G-S$ is connected.
Then $(G-S,r)$ is in ${\cal O}_{HD}$ if and only if $(G-S,r)$ is.
\end{lem}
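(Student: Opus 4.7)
The plan is to follow the blueprint of \autoref{h_add2}, with ``modifying a resistance'' replaced by ``deleting an edge.'' By iterating the single-edge case, I may assume $S=\{e\}$ for some edge $e=pq$; the hypothesis that $G-S$ is connected ensures that $G-e$ is connected, so that the free current in $G-e$ is well-defined. It suffices to show that a non-constant harmonic function of finite energy exists in $(G,r)$ if and only if it does in $(G-e,r)$.

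First I would produce, starting from a non-constant harmonic function $h$ of finite energy in $(G,r)$, such a function in $(G-e,r)$. Viewing $h|_{G-e}$ as a potential on $G-e$, it satisfies the mean-value property at every vertex other than $p$ and $q$, with accumulation $(h(p)-h(q))/r(e)$ at $p$. If $h(p)=h(q)$, then $h|_{G-e}$ itself is the desired function. Otherwise let $\phi$ be the potential of the free current ${\cal F}[G-e,r,p,q,U=1]$, of intensity $I_\phi$. If $I_\phi=0$, then $\phi$ is non-constant, harmonic in $G-e$, and of finite energy, so we are done. If $I_\phi>0$, set $\beta=(h(p)-h(q))/(r(e)I_\phi)$; then $h|_{G-e}-\beta\phi$ is harmonic of finite energy in $(G-e,r)$. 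To see it is not constant, suppose otherwise; then $h=\beta\phi+c$ on $V(G)$, and since $h$ is harmonic in $G$, $\phi$ would have to be harmonic at $p$ in $G$. But a direct calculation of $\phi$'s induced flow in $G$ shows that its accumulation at $p$ equals $-I_\phi-1/r(e)<0$, a contradiction.

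For the reverse direction, rather than carrying out the symmetric free-current argument --- which runs into a problematic sub-case precisely when the intensity of ${\cal F}[G,r,p,q,U=1]$ equals $1/r(e)$ --- I would invoke \autoref{char} directly. Its forward direction, applied in $(G-e,r)$, yields transient vertex-disjoint subnetworks $A$ and $B$ of $G-e$ together with a potential $\rho$ of finite energy on $(G-e)/A/B$ satisfying $\rho(A)\neq\rho(B)$. The same $A$ and $B$ remain transient vertex-disjoint subnetworks of $G$, and $\rho$, possibly extended by a single value at a vertex that was isolated after the deletion of $e$, still has finite energy on $G/A/B$ because the additional terms coming from the single edge $e$ contribute only a finite amount. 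The backward direction of \autoref{char} applied in $(G,r)$ now delivers a non-constant harmonic function of finite energy in $(G,r)$.

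The main obstacle is the non-constancy verification in the first direction; the explicit computation showing that $\phi$'s accumulation at $p$ in $G$ is strictly negative is the crucial point, as it cleanly rules out the problematic scenario that forces one to fall back on \autoref{char} for the reverse direction.
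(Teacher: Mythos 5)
Your argument is essentially sound, but note that the paper gives no written proof of \autoref{h_del}: it only remarks that the lemma ``is proved similar to \autoref{h_add2}'', i.e.\ by correcting $h$ with a multiple of the free-current potential in \emph{both} directions, the implicit key fact being that deleting $e=pq$ acts like setting $r(e)=\infty$ and leaves the potential of ${\cal F}[p,q,U=1]$ unchanged (every admissible potential pays the same constant $(\rho(p)-\rho(q))^2/r(e)=1/r(e)$ on $e$, so the minimizer is the same in $G$ and $G-e$). Your forward direction is exactly this template carried out more carefully: the computation that $\phi$'s accumulation at $p$ in $G$ equals $-I_\phi-1/r(e)<0$ is a clean way to settle non-constancy, and you correctly isolate the sub-case $I_\phi=0$. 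Your reverse direction genuinely departs from the paper by routing through \autoref{char}; this is legitimate --- the proof of \autoref{char} nowhere uses \autoref{h_del}, so there is no circularity --- and it honestly confronts the sub-case $I_\psi=1/r(e)$ that the symmetric free-current argument cannot resolve, a difficulty the paper's one-line remark glosses over. What \autoref{char} buys you is a uniform escape from that degenerate case; what the paper's route would buy, if completed, is a proof independent of the main theorem. Two small repairs are needed. First, with the paper's conventions (a $p$-$q$-flow has accumulation $-I$ at $p$, while $h|_{G-e}$ has accumulation $+(h(p)-h(q))/r(e)$ there), your $\beta$ should be $-(h(p)-h(q))/(r(e)I_\phi)$; this sign does not affect your non-constancy argument, which only needs $\beta\neq0$. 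Second, in the reduction to $|S|=1$ you must delete the edges of $S$ in a suitable order: for an arbitrary $e_1\in S$ the graph $G-e_1$ need not be connected (take $S$ to be the edge set of a pendant path), so one should at each step remove either a non-bridge of the current graph or a bridge whose removal merely isolates a vertex; such a choice always exists because the components of the current graph minus the remaining edges of $S$ are $G-S$ together with singletons.
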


Recall that an \emph{$a$-flow of intensity $I$} is an antisymmetric function having accumulation $I$ at $a$ and satisfying $(K1)$ at every other vertex.
Intuitively, the following \autoref{small_flow} states that if 
an $a$-flow of finite energy has small enough values on a set of edges $S$, then the $a$-flow gives rise to an $a$-flow of finite energy in $G-S$.

\begin{prop}\label{small_flow}
Let $f_a$ be an $a$-flow of intensity $I$ with finite energy in a connected locally finite network $(G,r)$ and
let $S$ be a set of edges such that $\sum_{s\in S} |f_a(\vec{s})|\leq
I/4$.
Then there is an $a$-flow $f_a'$ in $(G-S,r)$ with intensity at least $I/2$ satisfying
 $0\leq f_a'(\vec{e})\leq f_a(\vec{e})$ if $f_a(\vec{e}) \geq 0$.
In particular, $f_a'$ has finite energy.
\end{prop}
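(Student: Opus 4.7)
The plan is to combine a finite-exhaustion max-flow argument with a weak-limit step in the Hilbert space of finite-energy antisymmetric functions. After possibly restricting to the component of $a$ in $G-S$, I fix an increasing exhaustion $V_1\subseteq V_2\subseteq\cdots$ of $V(G-S)$ by finite connected sets each containing $a$, with $\bigcup_n V_n = V(G-S)$. For each $n$ I form a finite network $N_n$ by contracting $V(G-S)\setminus V_n$ to a single vertex $z_n$, and equip each directed edge $\vec{e}$ of $N_n$ with the capacity $\max(f_a(\vec{e}),0)$; these capacities encode exactly the desired pointwise constraint $0\le g(\vec{e})\le f_a(\vec{e})$ whenever $f_a(\vec{e})\ge 0$.

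The central estimate is that every cut in $N_n$ separating $z_n$ from $a$ has capacity at least $3I/4$. Such a cut corresponds to a partition $V(G-S) = W_a\sqcup W_b$ with $a\in W_a$ finite and $W_b\supseteq V(G-S)\setminus V_n$ infinite. The same partition of $V(G)$ is a cut in $G$, across which the net flow of $f_a$ from $W_b$ to $W_a$ equals $I$ because $f_a$ has intensity $I$ at $a$. Splitting this net flow according to whether the edge lies in $G-S$ or in $S$, and into positive and negative parts, and using the hypothesis $\sum_{s\in S}|f_a(\vec{s})|\le I/4$, the positive part on the $(G-S)$-edges---which is precisely the capacity of the cut in $N_n$---is at least $I - I/4 = 3I/4 \ge I/2$. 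The finite max-flow min-cut theorem then yields an antisymmetric $z_n$-to-$a$ flow $g_n$ in $N_n$ of value at least $I/2$ respecting the capacities.

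Extending each $g_n$ by zero to $\vec{E}(G-S)$ yields an antisymmetric function satisfying the desired pointwise bound, of energy at most ${\cal E}(f_a)$. The sequence $(g_n)$ is thus norm-bounded in the Hilbert space of finite-energy antisymmetric functions on $\vec{E}(G-S)$, so by weak compactness a subsequence converges weakly to some $g^*$. Weak convergence implies pointwise convergence $g_n(\vec{e})\to g^*(\vec{e})$ on every directed edge, so $g^*$ is antisymmetric and still satisfies the pointwise bound. Because the graph is locally finite, each accumulation is a finite sum that passes to the limit, and every vertex is eventually an interior vertex of some $V_n$; since $g_n$ has accumulation $0$ at every such interior $v\ne a$ and at least $I/2$ at $a$, the same holds for $g^*$, which is the desired $f_a'$.

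The main obstacle is the min-cut estimate: one has to carefully identify the $N_n$-capacity with the positive part of the $f_a$-flow on the $(G-S)$-portion of the corresponding cut in $G$, and then bound the $S$-contribution using only $\sum_{s\in S}|f_a(\vec{s})|\le I/4$. A secondary technical point is checking that the weak limit preserves the flow-conservation conditions vertex-by-vertex, which follows from local finiteness together with pointwise convergence on each directed edge.
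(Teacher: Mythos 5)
Your proof is correct, but it follows a genuinely different route from the paper's. The paper stays within its Hilbert-space framework: it applies \autoref{hilb} to the closed convex set of antisymmetric functions $g$ on $G-S$ satisfying the pointwise bound $0\le g(\vec{e})\le f_a(\vec{e})$, having accumulation at least $I$ at $a$ and total accumulation defect $\sum_{v\ne a}|accu(v)|\le I/2$ elsewhere; the restriction of $f_a$ lies in this set, the minimal-energy element $f^*$ is shown by minimality to satisfy (K1) away from $a$ and its neighbours, and a local repair at the neighbours of $a$ produces $f_a'$ with accumulation at least $I/2$. You instead exhaust $G-S$ by finite sets, contract the complement to a sink $z_n$, impose capacities $\max(f_a(\vec{e}),0)$, prove the quantitative min-cut bound $3I/4$ directly from the hypothesis $\sum_{s\in S}|f_a(\vec{s})|\le I/4$ (since any finite cut around $a$ carries net $f_a$-flow exactly $I$, of which at most $I/4$ travels through $S$), and pass to a weak limit, using local finiteness to transfer (K1) to the limit. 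Your max-flow/min-cut argument makes the conservation of the limit flow and the intensity bound completely explicit (you even get $3I/4$ rather than $I/2$), at the cost of invoking weak compactness and an exhaustion; the paper's argument is shorter given that \autoref{hilb} is already in place, but leans on a ``by minimality, $f^*$ satisfies (K1)'' step that is left to the reader. One small point you should make explicit: your construction presupposes that the component of $a$ in $G-S$ is infinite (otherwise there is no $z_n$), but this follows from your own cut computation, since a finite component of $a$ would be surrounded by a cut consisting solely of $S$-edges, forcing $I\le I/4$.
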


\begin{proof}
In order to obtain $f_a'$, we apply \autoref{hilb} on the set of all
antisymmetric functions $g$ in $G-S$ with 

\begin{itemize}
\item $0\leq g(\vec{e})\leq f_a(\vec{e})$ if $f_a(\vec{e}) \geq 0$,
\item accumulation at least $I$ at $a$,
\item $\sum_{v\in V-\{a\}} |accu(v)|\leq I/2$, where $accu(v)$ is the accumulation of $g$ at
$v$.
\end{itemize}

Note that the restriction of $f_a$ to $G-S$ is in this set and the set is closed
because all functions in the set have energy at most ${\cal E}(f_a)$. 
\autoref{hilb} yields an element $f^*$ with minimal energy. 
By minimality, $f^*$ satisfies (K1) at every vertex that is not $a$ or a
neighbor of $a$.
Let $f_a'$ be the function obtained from $f^*$ by changing the values of the
edges between $a$ and its neighbors such that (K1) is satisfied at all
neighbors of $a$.
By minimality we can assume that $0\leq f_a'(\vec{e})\leq f^*(\vec{e}) \leq
f_a(\vec{e})$ if $f_a(\vec{e}) \geq 0$. 
As we demanded $\sum_{v\in V-\{a\}} |accu(v)|\leq I/2$ for $f^*$, the accumulation of $f_a'$
at $a$ is at least $I/2$. 
This completes the proof.
\end{proof}

\begin{proof}[Proof of the backward implication of \autoref{G-S}.] 
Applying \autoref{char} to $(G,r)$ yields vertex-disjoint subnetworks $A$ and $B$, an $a$-flow $f_a$ of intensity $I>0$ with finite energy in $A$,
a $b$-flow $f_b$ of intensity $I>0$ with finite energy in $B$ and a potential $\rho$ of finite energy with $\rho(A)\neq \rho(B)$ in $G/A/B$.
Let us first consider the special case where $\sum_{s\in S} |f_a(\vec{s})|+\sum_{s\in S} |f_b(\vec{s})|<\epsilon$.
Since by \autoref{small_flow} the functions $f_a$ and $f_b$ give rise to an $a$-flow of non-zero intensity with finite energy in $A- S$ and a $b$-flow of 
non-zero intensity with finite energy in $B- S$,
it suffices to find a potential $\rho'$ of finite energy with $\rho'(A-S)\neq \rho'(B-S)$ in $(G-S)/ (A-S)/(B-S)$ for proving the special case applying once again \autoref{char}.
Since $G/A/B- S$ is obtained from $(G-S)/ (A-S)/(B-S)$ by identifying vertices, let $\rho'$ of a vertex in $(G-S)/ (A-S)/(B-S)$
be the $\rho$-value of the corresponding identification-set.
As $\rho$ has finite energy and $\rho(p)\neq \rho(q)$, the potential $\rho'$ has
finite energy and $\rho'(A-S)\neq \rho'(B-S)$, proving the special case by \autoref{char}.

Having treated the special case where $\sum_{s\in S} |f_a(\vec{s})|+\sum_{s\in S} |f_b(\vec{s})|<\epsilon$, it
remains to deduce the general case from this special case.
For this purpose, we first show that $\sum_{s\in S} |f_a(\vec{s})|$ is finite.
Applying  Cauchy-Schwartz-inequality $(\sum_{s\in S} x_s y_s)^2\leq
\sum_{s\in S} x_s^2 \sum_{s\in S} y_s^2$ with $x_s:=1/\sqrt{r(s)}, y_s:=\sqrt{r(s)}|f_a(\vec{s})|$, yields:

\[
\left(\sum_{s\in S} |f_a(\vec{s})|\right)^2\leq \sum_{s\in S} \frac{1}{r(s)}
\sum_{s\in S} r(s) f_a^2(s) 
\]

As both terms on the right side are finite, $\sum_{s\in S} |f_a(\vec{s})|$ is
finite.
Thus we can partition $S$ into $S_1$ and $S_2$ such that 
$\sum_{s\in S_1} |f_a(\vec{s})|+\sum_{s\in S} |f_b(\vec{s})|<\epsilon$ 
and $S_2$ is finite.
By the special case, we obtain that $G-S_1$ is not in ${\cal O}_{HD}$.
Hence by \autoref{h_del} $G-S_1-S_2$ is not in ${\cal O}_{HD}$, completing the proof.
\end{proof}

\section{Non-elusive harmonic functions}\label{sec:geo}

Recently, Georgakopoulos \cite{ agelos_unique_flow} introduced the concept of
non-elusiveness, which we will present now. 
One can define the accumulation of $\varphi$ at a finite cut as well:

\[
 \varphi(X,X'):= \sum_{\vec{e}|init(\vec{e})\in X, ter(\vec{e})\in X'}
\varphi(\vec{e})
\]

A $p$-$q$-flow with intensity $I$ is called \emph{non-elusive} if for every
finite cut $(X,X')$ with $p$ and $q$ on the same, the accumulation is zero. 
It follows for $p\in X, q\in X'$ that $\varphi(X,X')=\varphi(\{p\},V-
\{q\})=
\varphi(V- \{p\},\{q\})=I$.

Note that in a finite network every flow is non-elusive.
In some sense, \emph{non-elusiveness} ensures that (K1) also holds for the ends
of the Freudenthal-compactification.
For details see \cite{ agelos_unique_flow}.

A \emph{harmonic function is non-elusive} if the induced antisymmetric function
is non-elusive.
Notice that there is a non-constant non-elusive harmonic function (of finite
energy) in a connected graph if and only if there is one in at least one maximal
$2$-connected subgraph.
In particular, non-elusive harmonic functions on trees are constant.

In this section we will generalize \autoref{w_barri} to extend a theorem of
Georgakopoulos about non-elusive harmonic functions.
For this, we need some definitions.
A subgraph $S$ of a graph $G$ is called a \emph{barricade around the edge $e\in
E(G-S)$} if both of the following requirements hold, see \autoref{fig:barri}:

\begin{enumerate}
\item The component of $G-S$ containing $e$ is finite and called the
\emph{barricaded area $A(S,e)$}.    \label{2}
\item The intersection of $S$ with any component of $G-A(S,e)$ is connected.
\label{3}
\end{enumerate}

   \begin{figure}[htpb]
\begin{center}
   	  \includegraphics[width=5cm]{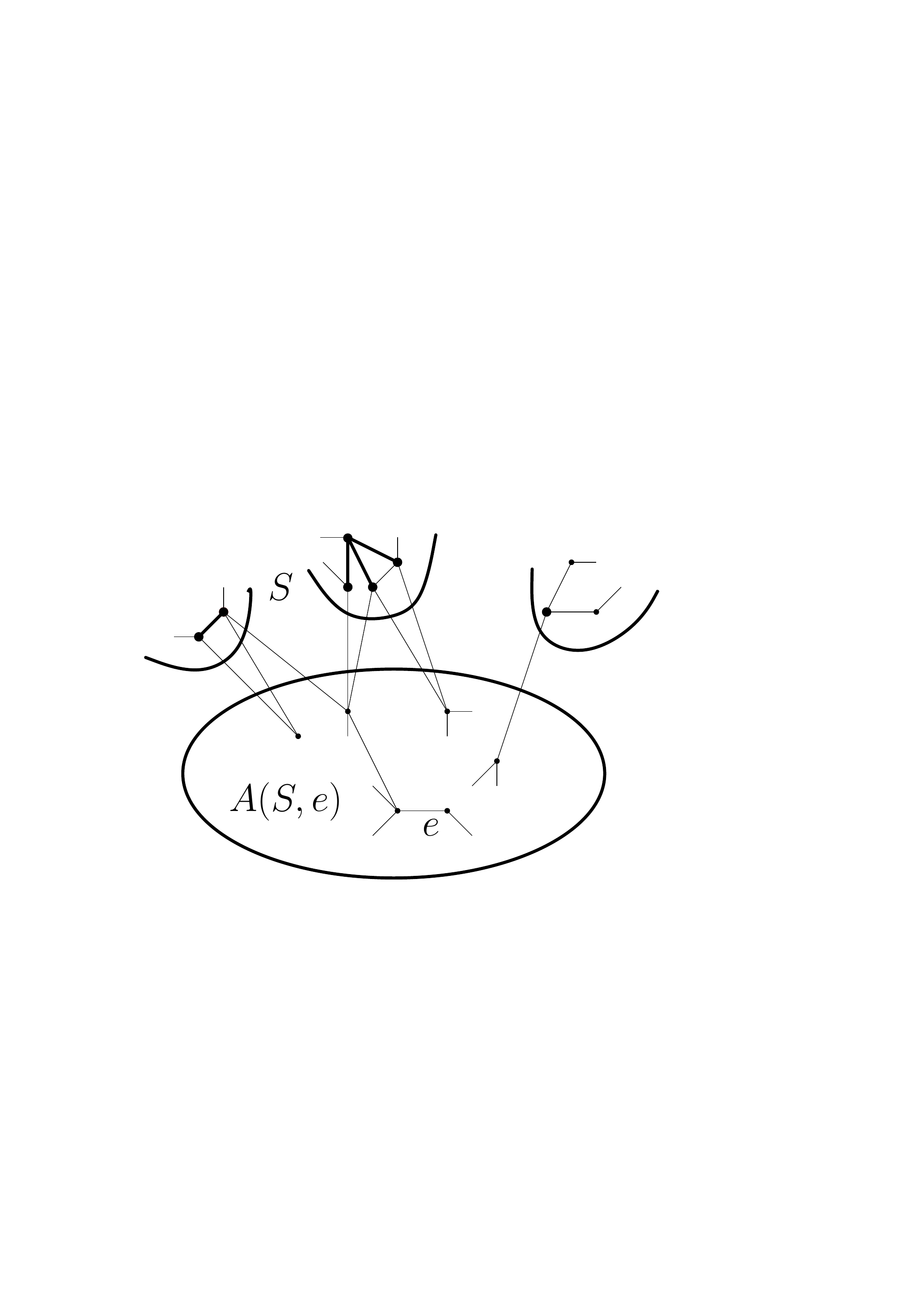}
   	  \caption{An example of a barricade. No proper subgraph of the
barricade, drawn thick, is again a barricade. Deleting a vertex, violates
requirement \ref{2}.
Deleting an edge, violates requirement \ref{3}.}
   	  \label{fig:barri}
\end{center}
   \end{figure}

The \emph{boundary $\partial S$ of a barricade $S$} is the neighborhood of the
barricaded area $A(S,e)$.
For a subset $C$ of a barricade $S$, define $\partial C:=\partial S\cap C$.
Let $R(x \leftrightarrow y ; G,r)$, or just $R(x \leftrightarrow y ; G)$ if $r$
is fixed, denote the effective resistance 
between the vertices $x$ and $y$ in a connected finite network $(G,r)$. 

For a component $C$ of a barricade, define the \emph{weak effective resistance
diameter} ${\bf wRD}$ by:

\[
 {\bf wRD}(C) := sup\{R(x \leftrightarrow y ; C) | x, y \in \partial C\}
\]

Furthermore, define the \emph{weak effective resistance diameter} ${\bf wRD}$ of
a barricade as the sum of the weak effective resistance diameters of the
components of the barricade.
Note that in the case of unit resistances, ${\bf wRD}(S)$ is at most the graph
diameter of $S$.

The following theorem states that if the weak effective resistance diameters of
a sequence of barricades does not grow too fast, 
then every non-elusive  harmonic function of finite energy is constant.

\begin{thm} \label{satz}
Let $(G,r)$ be a connected locally finite network which has for every edge $e\in
E(G)$ infinitely many edge-disjoint barricades  $S_1,S_2,...$ around $e$
with $\sum_{n=1}^\infty 1/{\bf wRD}(S_n) =\infty$.
Then every non-elusive harmonic function of finite energy is constant.
\end{thm}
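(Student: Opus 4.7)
Proof plan.

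I would argue by contradiction: assume that $h$ is a non-constant non-elusive harmonic function of finite energy in $(G,r)$, and let $f$ be the induced antisymmetric function on $\vec{E}(G)$. Pick an edge $e_0=\{u,v\}$ with $I:=f(\vec{e}_0)>0$ and set $U_0:=r(e_0)I=h(u)-h(v)>0$; by hypothesis there are edge-disjoint barricades $S_1,S_2,\ldots$ around $e_0$ with $\sum_n 1/{\bf wRD}(S_n)=\infty$. The core step is to prove the per-barricade estimate ${\cal E}(f|S_n)\ge U_0^2/{\bf wRD}(S_n)$; combined with the edge-disjointness of the $S_n$, this immediately yields ${\cal E}(f)\ge U_0^2\sum_n 1/{\bf wRD}(S_n)=\infty$, contradicting the finite-energy assumption.

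I would establish the per-barricade bound in two moves. Since $h$ is harmonic on the finite barricaded area $A_n:=A(S_n,e_0)$ whose boundary in $G$ is exactly $\partial S_n$, the maximum principle forces $\max_{\partial S_n}h-\min_{\partial S_n}h\ge U_0$, because both endpoints of $e_0$ lie in $A_n$ and differ by $U_0$ in value. On each component $C$ of the barricade the same path-plus-Cauchy--Schwarz calculation used at the end of the proof of \autoref{w_barri} yields, for any $x,y\in\partial C$,
\[
{\cal E}(f|C) \ge \frac{(h(x)-h(y))^2}{R(x \leftrightarrow y;C)} \ge \frac{(h(x)-h(y))^2}{{\bf wRD}(C)}.
\]
Setting $U_C:=\max_{\partial C}h-\min_{\partial C}h$ and applying the Cauchy--Schwarz-type inequality $\sum_C a_C^2/b_C\ge(\sum_C a_C)^2/\sum_C b_C$ then reduces the per-barricade estimate to proving $\sum_C U_C\ge U_0$.

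The main obstacle is exactly this last inequality. Intuitively, it asserts that the oscillation of size at least $U_0$ of $h$ on $\partial S_n$ must be realised \emph{within} individual components of the barricade rather than only by jumps between them. This is where non-elusiveness should enter decisively: a large jump of $h$ between boundary vertices lying in different components of $S_n$ would force a sizeable net current across a finite cut (separating a union of components of $S_n$, together with the outer piece of $G$ they face, from the rest of $G$), and non-elusiveness forbids precisely such a net current. Making this heuristic precise -- identifying the right family of finite cuts, translating $h$-differences across components into a quantitative lower bound on the current through them, and combining this with the finite-energy assumption on $f$ -- is the technical heart of the proof, and explains why the argument genuinely needs the non-elusiveness hypothesis (which was not required in \autoref{w_barri}, where connectedness of each $S_i$ put all of $U_0$ into a single component for free).
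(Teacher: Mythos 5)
Your quantitative skeleton is essentially the paper's: the per-component bound ${\cal E}(h|_C)\ge U_C^2/{\bf wRD}(C)$ followed by Cauchy--Schwarz over the components is exactly the computation carried out in the paper's claim~(\ref{u_eps}) (there written as $U(h|S_i)^2\le {\bf wRD}(S_i)\cdot{\cal E}(h|_{S_i})$ with $U(h|S_i)=\sum_C U_C$), and summing over the edge-disjoint barricades is how the divergence hypothesis is consumed. But the proposal has a genuine gap at exactly the point you yourself flag as ``the technical heart'': the inequality $\sum_C U_C\ge U_0$ for every barricade around $e_0$ is asserted and motivated, but not proved. This inequality \emph{is} the theorem. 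The maximum principle only gives $\max_{\partial S_n}h-\min_{\partial S_n}h\ge U_0$, i.e.\ it controls the global oscillation over the boundary, and that oscillation can be realised entirely by jumps \emph{between} components, with $h$ nearly constant on each $\partial C$ (this actually happens for elusive harmonic functions, e.g.\ on transient trees, where every $S_n\cap K$ can be a single vertex and $\sum_C U_C=0$); so the inequality really does have to be extracted from non-elusiveness by an explicit argument.

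Moreover, the heuristic you propose for that step --- that a jump between components forces a net current through a finite cut --- does not work as stated: $h$ is harmonic everywhere, so every relevant finite cut has accumulation zero in any case, and a difference of boundary potentials does not by itself bound any current. The paper's mechanism is different. It contracts every component of $G-A(S_n,e_0)$ to a single vertex, obtaining a \emph{finite} graph $G'$; non-elusiveness says precisely that the restriction of $h_E$ to $E(G')$ is a flow of intensity zero there; \autoref{kreis} then produces a directed cycle through $\vec{e}_0$ all of whose edges carry positive voltage; and barricade property~(2) (connectedness of $S_n\cap K$ for each component $K$ of $G-A(S_n,e_0)$) allows this to be closed up to a genuine cycle of $G$ by inserting paths inside single components of $S_n$, whose voltage contributions telescope to boundary differences bounded in total by $\sum_C U_C$. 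If $\sum_C U_C<U_0$ this cycle violates Kirchhoff's cycle law, which is impossible for an edge function induced by a potential. You would need to supply this construction (or an equivalent one); as written, the proposal reduces the theorem to an unproved claim.
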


\autoref{satz} generalizes the \emph{Unique Currents from Internal
Connectivity}-Theorem from Lyons and Peres \cite{LyonsBook} which implies
\autoref{w_barri}.
As \autoref{satz} can be meaningfully applied to graphs with more than one end,
\autoref{satz} is stronger than the aforementioned Theorem, which only holds for
one ended graphs.
If $G$ is $2$-connected, then in \autoref{satz} it is enough to check the
condition just for one edge $e$:

\begin{cor}
 Let $(G,r)$ be a $2$-connected locally finite network which has, for some edge
$e\in E(G)$, infinitely many edge-disjoint barricades  $S_1,S_2,...$ around $e$
with $\sum_{n=1}^\infty 1/{\bf wRD}(S_n) =\infty$.
Then every non-elusive harmonic function of finite energy is constant.
\end{cor}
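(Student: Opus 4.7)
The plan is to deduce this corollary from \autoref{satz} by promoting the one-edge hypothesis to an every-edge hypothesis: I will show that once we have a sequence of edge-disjoint barricades $S_1,S_2,\ldots$ around the given edge $e$ with $\sum_n 1/{\bf wRD}(S_n) = \infty$, then for every edge $e^* \in E(G)$, cofinitely many of the same $S_n$ are already barricades around $e^*$, with unchanged ${\bf wRD}$-values. Then \autoref{satz} applies directly.

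Fix $e^* \in E(G)$ arbitrarily. Using that $G$ is connected (indeed $2$-connected), choose a finite path $P$ from an endpoint of $e$ to an endpoint of $e^*$, and let $W := V(P)$. Since $G$ is locally finite and the $S_n$ are edge-disjoint, each vertex $w \in W$ is incident to at most $\deg(w) < \infty$ edges of $\bigcup_n E(S_n)$, and hence lies in only finitely many $V(S_n)$ (assuming, as is natural, that barricades contain no isolated vertices). Taking the union over the finite set $W$, there is a finite exceptional set $N_0 \subset \mathbb{N}$ such that for every $n \notin N_0$ we have $V(S_n) \cap W = \emptyset$, and in particular $P \subset G - S_n$.

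For such $n$, the edge $e^*$ is in the same component of $G - S_n$ as $e$, namely the finite set $A(S_n, e)$. Hence $A(S_n, e^*) = A(S_n, e)$ is finite, verifying condition (1); and the components of $G - A(S_n, e^*)$ coincide with those of $G - A(S_n, e)$, so condition (2) for $e^*$ is inherited verbatim from that for $e$. Thus $S_n$ is a barricade around $e^*$ with the same ${\bf wRD}$-value. Deleting the finitely many exceptional indices preserves the divergence $\sum_n 1/{\bf wRD}(S_n) = \infty$, so the hypothesis of \autoref{satz} holds at $e^*$. Since $e^* \in E(G)$ was arbitrary, \autoref{satz} yields that every non-elusive harmonic function of finite energy is constant.

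The main obstacle is the mild technicality about isolated vertices in barricades: edge-disjointness of the $S_n$ bounds the number that contain a given $w$ via an incident edge, but not the number in which $w$ sits in isolation. This is handled either by adopting the convention that barricades have no isolated vertices and trimming each $S_n$ accordingly (trimming does not affect ${\bf wRD}$ or either barricade condition), or by using $2$-connectedness to reroute $P$ around any finitely many pathological vertices. In the $2$-connected setting of the corollary, both options are available, so the bookkeeping goes through without incident.
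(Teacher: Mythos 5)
Your overall strategy is exactly the paper's: fix an arbitrary edge $e^*$, take a finite path $P$ joining $e$ to $e^*$, and argue that only finitely many of the edge-disjoint barricades $S_n$ can meet $V(P)$, so that cofinitely many $S_n$ are barricades around $e^*$ with the same ${\bf wRD}$-values, whereupon \autoref{satz} applies. That reduction and the bookkeeping around it are fine (up to the small point that $P$ should contain both endpoints of $e^*$, not just one, so that $e^*\in E(G-S_n)$).

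The gap is precisely the point you defer to the end as a ``mild technicality''. Edge-disjointness of the $S_n$ only bounds the number of $S_n$ containing an \emph{edge} at a vertex $w$; a priori $w$ could lie in infinitely many $S_n$ as an isolated vertex of the subgraph, and then your finite exceptional set $N_0$ does not exist. Neither of your two proposed fixes works as stated. Trimming isolated vertices from $S_n$ changes $G-S_n$: the removed vertex $v$ rejoins $G-S_n$ and may merge $A(S_n,e)$ with other (possibly infinite) components of $G-S_n$, so the claim that trimming ``does not affect either barricade condition'' is false in general; nor are you entitled to simply impose a convention on the barricades handed to you by the hypothesis. Rerouting $P$ is also unjustified: you would need to know that the set of vertices lying in infinitely many $S_n$ does not separate $e$ from $e^*$, and a rerouted path may contain new pathological vertices, with no evident termination. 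The paper closes exactly this hole, and this is the only place the $2$-connectedness hypothesis is genuinely used: if $p\in V(S)$ for a barricade $S$, then $p$ lies in some component $K$ of $G-A(S,e)$, the barricade requirement that $S\cap K$ be connected together with $2$-connectedness forces $S$ to contain an edge incident with $p$, and then edge-disjointness plus local finiteness give the desired finiteness. Since your writeup never actually deploys $2$-connectedness beyond a passing mention, the essential content of the corollary's extra hypothesis is missing from the proof; with the paper's argument substituted for your ``technicality'' paragraph, the rest goes through.
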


\begin{proof}
Given infinitely many edge-disjoint barricades  $S_1,S_2,...$ around $e$ with
$\sum_{n=1}^\infty 1/{\bf wRD}(S_n) =\infty$, 
we will show for every edge $e'$ that all but finitely many of these barricades
are barricades around $e'$, too.
Let $\cal S$ be any set of edge-disjoint barricades separating $e$ and $e'$. 
It is sufficient to prove that $\cal S$ is finite.
Let $P$ be any finite path containing $e$ and $e'$. 
It suffices to show that each vertex $p$ on $P$ is contained in only finitely
many barricades of $\cal S$.
The $2$-connectedness of $G$ yields that if the vertex $p$ is in some $S\in \cal
S$, then, by requirement \ref{3} of the barricade-properties, $S$ contains at
least one edge incident with $p$, as well. 
As $G$ is locally finite, $\cal S$ is finite, completing the proof.
\end{proof}

The following theorem of Georgakopoulos can be deduced by \autoref{satz}.

     \begin{thm}[Georgakopoulos \cite{agelos_unique_flow}]  \label{u_geo}
Let $(G,r)$ be a connected locally finite network such that $\sum_{e\in E}
r(e)<\infty$.
Then every non-elusive harmonic function of finite energy is constant.
      \end{thm}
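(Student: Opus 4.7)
My plan is to deduce Theorem~\ref{u_geo} from Satz~\ref{satz}: for every edge $e \in E(G)$ I will construct infinitely many edge-disjoint barricades $S_1, S_2, \ldots$ around $e$ with $\sum_{n \geq 1} 1/{\bf wRD}(S_n) = \infty$, from which Satz~\ref{satz} yields the conclusion.

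The key estimate is ${\bf wRD}(S) \leq \sum_{f \in E(S)} r(f)$ for every barricade $S$. If $C$ is a component of $S$ and $x, y \in V(C)$, pick any $x$-$y$-path $P$ inside the connected graph $C$; Rayleigh monotonicity (deleting edges of $C$ not in $P$ only increases the effective resistance) gives $R(x \leftrightarrow y; C) \leq R(x \leftrightarrow y; P) = \sum_{f \in E(P)} r(f) \leq \sum_{f \in E(C)} r(f)$, and summing over components proves the claim. Granted this estimate, edge-disjointness of the $S_n$ delivers
\[
\sum_n {\bf wRD}(S_n) \leq \sum_n \sum_{f \in E(S_n)} r(f) \leq \sum_{f \in E(G)} r(f) < \infty,
\]
so ${\bf wRD}(S_n) \to 0$ and hence $\sum_n 1/{\bf wRD}(S_n) = \infty$, exactly the hypothesis of Satz~\ref{satz}.

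It remains to build the barricades. Fix an endpoint $v$ of $e$ and the finite balls $B_n = \{w : d(v,w) \leq n\}$, arranging (as we may) that each $G[B_n]$ is connected. Choose a rapidly increasing sequence $0 < n_1 < n_2 < \cdots$ and, for each $k$, aim at a barricade $S_k$ with $A(S_k, e) = B_{n_{k-1}}$ whose edges lie in (a thickening of) the slab between radii $n_{k-1}$ and $n_k$. Taking these slabs vertex-disjoint makes the $S_k$ edge-disjoint automatically. The technical heart, and main obstacle, is barricade condition~(2): for each component $K$ of $G - B_{n_{k-1}}$, the intersection $S_k \cap K$ must be connected. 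The naive slab $G[V(B_{n_k}) \setminus V(B_{n_{k-1}})]$ can fail this, since two arms of $K$ entering at level $n_{k-1}+1$ may only reconnect beyond level $n_k$. I would remedy this either by choosing the slabs thick enough (large $n_k - n_{k-1}$) to catch every such reconnection, or by augmenting $S_k$ with extra edges inside $K$ whose use can be coordinated across $k$ by inductive bookkeeping; the fact that $\sum_f r(f) < \infty$ bounds the total budget available for reuse and guarantees the augmentations can be made edge-disjoint. Once the barricades exist, the displayed estimate above completes the argument.
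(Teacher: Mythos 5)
Your overall route is the paper's: deduce \autoref{u_geo} from \autoref{satz} by exhibiting, around each edge, infinitely many edge-disjoint barricades whose weak effective resistance diameters are controlled by resistance sums. Your estimate ${\bf wRD}(S)\leq\sum_{f\in E(S)}r(f)$ (path in each component plus Rayleigh monotonicity, summed over components) is correct and slightly sharper than what the paper uses; the paper is content with the uniform bound ${\bf wRD}(S_n)\leq D:=\sum_{e\in E}r(e)$, which already gives $\sum_n 1/{\bf wRD}(S_n)\geq\sum_n 1/D=\infty$ without any appeal to ${\bf wRD}(S_n)\to 0$.

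The genuine gap is the step you yourself call the ``technical heart'': the existence of the infinitely many edge-disjoint barricades is never actually established. Your two proposed remedies for barricade condition~(2) are left as alternatives rather than carried out, and the second rests on a misconception: the finiteness of $\sum_f r(f)$ has nothing to do with making the barricades edge-disjoint. Their existence is a purely graph-theoretic property of locally finite connected graphs, which is exactly how the paper handles it in statement~(\ref{G_hat_barri}): having built finitely many barricades, take a finite connected subgraph $A$ containing $e$ and all of them; since $G$ is connected and locally finite, $G-A$ has only finitely many components $K$, each meeting the finite set $N(A)$, and in each $K$ one picks a finite connected subgraph $T_K\supseteq N(A)\cap K$. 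Then $S:=\bigcup_K T_K$ is a finite barricade with barricaded area $A$ (every path leaving $A$ immediately hits some $T_K$, and $S\cap K=T_K$ is connected), and it is vertex-disjoint from all previous barricades, which lie inside $A$. Your slab construction can also be repaired, but only by choosing $n_k$ \emph{adaptively} after inspecting the graph (connect the finitely many entry points of each $K$ by finite paths inside $K$ and take $n_k$ large enough to contain them, then argue via geodesics that the whole annulus piece of $K$ is connected) --- a fixed ``rapidly increasing sequence'' chosen in advance does not suffice, since the reconnections may occur arbitrarily far out. Replacing your unfinished slab argument by the greedy construction above closes the gap; the rest of the proposal is sound.
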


\begin{proof}

For the proof, we first check the following fact.

\begin{equation}\label{G_hat_barri}
\begin{minipage}[c]{0.8\textwidth}
In every locally finite graph for every edge $e$ there are infinitely many
disjoint finite barricades $S_n$ around $e$.
\end{minipage}
\end{equation}

Assume finitely many finite barricades around $e$ are already constructed, our
task is to define one more being disjoint with the previous ones.
As $G$ is locally finite, there is a finite connected subgraph $A$ containing
$e$ and all so far constructed barricades.
Since $G$ is locally finite, there is a finite barricade with $A$ as barricaded
area, proving~(\ref{G_hat_barri}).

By (\ref{G_hat_barri}) for every edge $e$ there are infinitely many
edge-disjoint barricades $S_n$ around $e$.
Define $D:=\sum_{e\in E}r(e)$. As $\sum_{n=1}^\infty 1/{\bf wRD}(S_n) \geq
\sum_{n=1}^\infty 1/D =\infty$, \autoref{satz} yields the assertion.
\end{proof}

\section{Proof of \autoref{satz}}\label{sec:barri_proof}

As later on in the proof of \autoref{satz}, we assume there exists a non-elusive
non-constant harmonic function $h$ of finite energy in $(G,r)$.
Before proving \autoref{satz}, we will show (\ref{u_eps}), transforming the
resistance condition $\sum_{n=1}^\infty 1/{\bf wRD}(S_n) =\infty$ into a voltage
condition.
Later on, we will use the voltage condition instead of the resistance condition.
Define the \emph{voltage at a barricade $S_i$} as $U(h|S_i):=\sum_{A}
max\{|h(a_{1})-h(a_{2})|  \text{, where }  a_{1},a_{2}\in \partial A \}$,
summing over all components $A$ of $S_i$.

\begin{equation}\label{u_eps}
\begin{minipage}[c]{0.8\textwidth}
For every $\epsilon>0$ there is a barricade $S_i$ with voltage 
$U(h|S_i)<\epsilon$.
\end{minipage}
\end{equation}

Intuitively, this means that small resistances at the barricades imply small
voltages at the barricades.

\emph{Proof of (\ref{u_eps}).}
The tools of Section \ref{sec:def} hold only for connected network.
As $S_i$ is not necessarily connected, 
we will construct a connected auxiliary graph $S_i'$ by identifying vertices of
different components of $S_i$ for applying the tools in $S_i'$.

To begin with the construction of $S_i'$, we enumerate the components of $S_i$
with $1,.., k$.
For every component $A_j$, in the boundary $\partial A_j$ we have vertices $s_j$
and $t_j$ for which $|h(s_j)-h(t_j)|$ attains its maximum.

We obtain the auxiliary graph $S_i'$ from $S_i$ by identifying $t_j$ with
$s_{j+1}$ for all $j\leq k-1$.
Note that the effective resistance between $s_{1}$ and $t_{k}$ in $S_i'$ is at
most ${\bf wRD}(S_i)$.
Let ${\cal F}$ be the free current in $S_i'$ between $s_{1}$ and $t_{k}$ with
voltage $U(h|S_i)$.

As $h$ induces a potential in $S_i'$, in $S_i'$ we can relate ${\bf wRD}(S_i)$
to $U(h|S_i)$ in the following way:

\[
U(h|S_i)^2  =  (U\left(  {\cal F}\right ))^2\leq ^{\autoref{uri}}
\]
\[
\leq {\bf wRD}(S_i) \cdot {\cal E}({\cal F}) \leq^{\text{minimizing property of
$\cal F$}} {\bf wRD}(S_i)\cdot  {\cal E}(h|_{S_i})
\]
Here ${\cal E}(h|_{S_i})$ is the energy of $h$ on the edges of $S_i$.
If we assume in contrast to (\ref{u_eps}) that there is an $\epsilon>0$ such
that $U(h|S_i)\geq \epsilon$ for all $i$, then we get a contradiction to the
fact that energy is finite as follows:

\[
 {\cal E}(h)\geq \sum_{i}  {\cal E}(h|_{S_i}) \geq^{\text{last inequation}}
\sum_{i}\frac{U(h|S_i)^2}{{\bf wRD}(S_i)}\geq \epsilon^2  \sum_{i}\frac{1}{{\bf
wRD}(S_i)}=\infty
\]

This proves (\ref{u_eps}). We can now prove \autoref{satz}.

\begin{proof}[Proof of \autoref{satz}]

Assume there exists a non-elusive non-constant harmonic function $h$ of finite
energy in $(G,r)$.
As usual, $h$ induces a function $h_E$ on the directed edges via
$h_E((e,v,w)):=\frac{h(v)-h(w)}{r(e)}$. 
Since $h$ is non-constant, there is a directed edge $\vec{e}$ with
$h_E(\vec{e})>0$.
The voltage condition (\ref{u_eps}) yields a barricade $S_i$ around $e$ with
$U(h|S_i)<\epsilon$ for $\epsilon:= h_E(\vec{e})r(e)$.

To obtain a contradiction, we seek a cycle violating Kirchhoff's cycle law.
This will be done in two steps. Firstly, we find a cycle heavily violating
Kirchhoff's cycle law in an auxiliary graph $G'$ 
which we obtain from $G$ by contracting each component of $G-A(S_i,e)$ to a
vertex.
Secondly, we extend this cycle to a cycle in $G$ using only edges of $S_i$. As
$U(h|S_i)<\epsilon$, we will be able to show that in this new cycle (K2) is
still violated.

Let us now construct the above mentioned cycle in $G'$. As the barricaded area
$A(S_i,e)$ is finite and therefore $G-A(S_i,e)$ has only finitely many
components, $G'$ is finite.
Since $h_E$ is non-elusive, the restriction $h'_E$ of $h_E$ to $E(G')$ is a flow
of intensity zero in $G'$.
Thus \autoref{kreis} yields a directed cycle $\vec{C'}$ in $G'$ with $\vec{e}\in
\vec{C'}$ and $h'_E(\vec{c})>0$ for every $\vec{c}\in\vec{C'}$.

   \begin{figure} [htpb]   
\begin{center}
   	  \includegraphics[width=7cm]{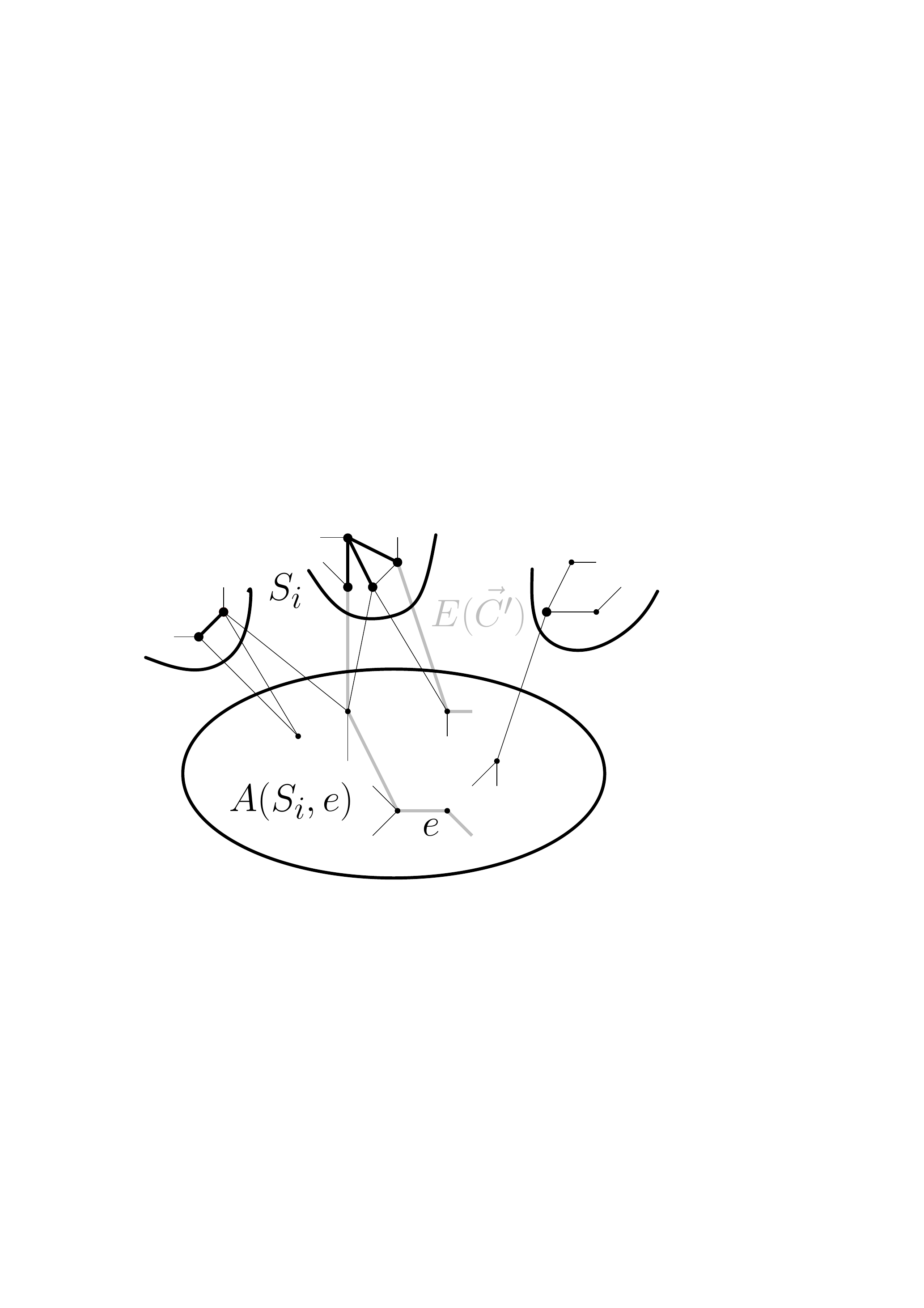}
   	  \caption{The construction of $C$. The gray set $E(\vec{C'})$ can be
extended to a cycle in $G$ by just adding edges of the barricade $S_i$ drawn
thick in this figure.}
   	  \label{fig:barri_cycle}
\end{center}
   \end{figure}

Having found this cycle $\vec{C'}$ in $G'$,  we will extend its edge set
$E(\vec{C'})$, considered as a set of edges in $G$, into a cycle in $G$; see
\autoref{fig:barri_cycle}.
Note that $E(\vec{C'})$ has at most two vertices in any component of
$G-A(S_i,e)$.
Let $K$ be any component of $G-A(S_i,e)$ where $E(\vec{C'})$ has  exactly two
vertices, say $v$ and $w$. 
Since $S_i$ is a barricade, $v$ and $w$ are contained in $S_i$ and thus there is
a $v$-$w$-path $W_K$ in $S_i \cap K$.
The desired cycle $C$ in $G$ is the union of $E(\vec{C'})$ with such paths $W_K$
for all $K$.
Indeed, as different paths $W$ are disjoint and intersect $\vec{C'}$ only in
end-vertices, this union is in fact a cycle.


For the desired contradiction, it remains to check that $\vec{C}$ violates
Kirchhoff's cycle law:
the voltage-sum of the directed edges in $\vec{C'}$ is at least $\epsilon=
h_E(\vec{e}) r(e)$, whereas the sum over the voltages of the edges of $S_i$ is
at most $U(h|S_i)<\epsilon$.
Thus $h_E$ violates (K2), completing the proof.

\end{proof}

\section{Acknowledgements}

I am very grateful to Agelos Georgakopoulos for his great supervision of this project.

\bibliographystyle{plain}
\bibliography{literatur1.bib}

\end{document}